\documentclass{article}
\usepackage[pdftex]{hyperref}
\usepackage{tikz-cd, url}

\hypersetup{
  colorlinks   = true, 
  urlcolor     = gray, 
  linkcolor    = red, 
  citecolor   =  blue
}
\usepackage{amsmath,amsfonts,amsthm,amssymb,graphicx,xcolor}
\usepackage[all,2cell,ps]{xy}

\newcommand{\subjclass}[1]{%
  \par\noindent\textit{2020 Mathematics Subject Classification.} #1.%
}

\newcommand{\keywords}[1]{%
  \par\noindent\textit{Key words and phrases.} #1.%
}
\newtheorem{theorem}{Theorem}[section]
\newtheorem{proposition}[theorem]{Proposition}
\newtheorem{lemma}[theorem]{Lemma}
\newtheorem{corollary}[theorem]{Corollary}
\theoremstyle{remark}

\newtheorem{definition}[theorem]{Definition}

\newcommand{\R}{\mathbb R}
\newcommand{\Sph}{\mathbb S}
\newcommand{\Bdef}{\mathcal B}
\newcommand{\Ndef}{\mathcal N_{\phi}}
\newcommand{\Qdef}{\mathcal Q}

\newcommand{\ord}{\operatorname{ord}}

\usepackage{mathtools}

\DeclarePairedDelimiter\abs{\lvert}{\rvert}
\DeclarePairedDelimiter\norm{\lVert}{\rVert}

\makeatletter
\let\oldabs\abs
\def\abs{\@ifstar{\oldabs}{\oldabs*}}
\let\oldnorm\norm
\def\norm{\@ifstar{\oldnorm}{\oldnorm*}}
\makeatother

\title{Almgren frequency and Agmon convexity on rotationally symmetric Cartan-Hadamard manifolds}
 \author{\small{Luca F. Di Cerbo} \\ \scriptsize{University of Florida} \\ \footnotesize{\textsf{ldicerbo@ufl.edu}}}
\date{}

\begin{document}

\maketitle

\begin{abstract}
We introduce a \emph{global} frequency function for harmonic functions on rotationally symmetric Cartan-Hadamard manifolds. In real hyperbolic space, we also identify a different frequency function associated with the normalized spherical $L^2$-mass of a hyperbolic harmonic function and prove its monotonicity. As a consequence, we obtain a logarithmic convexity theorem for the normalized spherical $L^2$-mass and a hyperbolic analog of the classical three-sphere theorem.

\end{abstract}

\vspace{8cm}

\subjclass{Primary 31B05, 53C21; Secondary 53C20}

\keywords{Frequency function, Price equality, harmonic functions, hyperbolic space, three-sphere theorem}


\tableofcontents\quad\\

\vspace{1.5cm}

\section{Introduction and Main Results}

In \cite{Alm79}, Almgren introduced, among many other things, a globally defined frequency function associated with a harmonic function on Euclidean space. This beautiful function has several remarkable monotonicity properties and is useful for studying the convexity of the $L^2$-mass of harmonic functions on spheres. This circle of ideas has been extended \emph{locally} to Riemannian manifolds and to general second-order differential operators; see \cite{GL86}.

The author has been interested for quite some time in the problem of finding \emph{global} generalizations of Almgren's frequency function to Riemannian manifolds. In particular, this study originates from an attempt to construct an Almgren frequency function on general Cartan-Hadamard manifolds. In Section \ref{Generalization1}, we obtain a global Almgren frequency function on rotationally symmetric Cartan-Hadamard manifolds. 

\begin{theorem}\label{frequency}
Let $n\geq 3$ and let $f$ be a nonzero harmonic function on $\R^n$ equipped with a rotationally symmetric metric $g_{\mathrm{rot}}=dr^2+\phi(r)^2g_{\Sph^{n-1}}$ of nonpositive sectional curvature (\emph{i.e.}, $\phi^{\prime\prime}\geq 0$, $\phi^\prime\geq 1$).  Then
\begin{equation}\label{eq:Nprime}
\Ndef(r)=\phi(r)\frac{\int_{B_r}\phi^\prime(s)|\nabla f|^2dv}{\int_{S_r}f^2d\sigma}
\end{equation}
is nondecreasing for $r>0$, and if $p$ denotes the pole of the rotationally symmetric metric, then $\lim_{r\to 0}\Ndef(r)=\ord_{p}(f)$.
\end{theorem}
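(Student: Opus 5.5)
The approach is the classical Almgren argument, with the Euclidean Rellich–Pohozaev identity replaced by a Price-type identity built from the conformal Killing field $X=\phi(r)\partial_r$. Write
\[
D(r)=\int_{B_r}\phi'(s)|\nabla f|^2\,dv,\qquad H(r)=\int_{S_r}f^2\,d\sigma,
\]
so that $\Ndef=\phi D/H$. In polar coordinates $d\sigma=\phi(r)^{n-1}d\theta$.

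\textbf{Step 1: the derivative of $H$.} Differentiating gives
\[
H'(r)=\frac{(n-1)\phi'}{\phi}H+2\int_{S_r}f f_r\,d\sigma .
\]
Green's identity together with $\Delta f=0$ gives $\int_{S_r}ff_r\,d\sigma=\int_{B_r}|\nabla f|^2\,dv$. The natural quantity, however, is $D$, with its weight $\phi'$. The key observation is that $X=\phi\partial_r$ satisfies $\mathcal L_Xg=2\phi' g$, i.e. $\nabla X=\phi'\,\mathrm{Id}$. I will therefore not try to rewrite $H'$ in terms of $D$ directly. Instead I record
\[
\frac{H'}{H}=\frac{(n-1)\phi'}{\phi}+\frac{2\int_{S_r}ff_r}{H},
\]
and compare it with $D'/D$.

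\textbf{Step 2: the Price/Rellich identity.} Apply the divergence theorem to the vector field
\[
Y=|\nabla f|^2X-2\langle X,\nabla f\rangle\nabla f .
\]
Since $\Delta f=0$ and $\nabla X=\phi'\,\mathrm{Id}$,
\[
\mathrm{div}\,Y=(n-2)\phi'|\nabla f|^2 .
\]
Integrating over $B_r$ yields
\[
(n-2)D=\phi\int_{S_r}|\nabla f|^2-2\phi\int_{S_r}f_r^2 ,
\]
and hence
\[
D'=\phi'\int_{S_r}|\nabla f|^2=\frac{\phi'}{\phi}(n-2)D+2\phi'\int_{S_r}f_r^2 .
\]
This is where the weight $\phi'$ in $D$ earns its place: it makes the Price identity exact. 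It remains to relate $D$ to a boundary integral. Take $Z=\phi' f\nabla f$. Then
\[
\mathrm{div}\,Z=\phi'|\nabla f|^2+\phi'' f f_r ,
\]
so
\[
D=\phi'\int_{S_r}ff_r-\int_{B_r}\phi''ff_r .
\]
The correction term equals $\int_0^r\phi''(s)\,\tfrac12 H_0'(s)\,ds$ with $H_0(s)=\int_{S_s}f^2$ measured suitably. This is the main obstacle: the error term has no sign a priori.

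\textbf{Step 3: handling the error term.} I would try to show
\[
E(r)=\int_{B_r}\phi''ff_r\,dv=\int_0^r\phi''(s)\int_{S_s}ff_r\,d\sigma\,ds=\int_0^r\phi''(s)\int_{B_s}|\nabla f|^2\,ds\;\ge 0 .
\]
The last equality uses Green's identity on each $B_s$, and nonnegativity uses $\phi''\ge0$. Since $\int_{B_s}|\nabla f|^2\le \int_{B_r}|\nabla f|^2$, this gives
\[
D\le \phi'(r)\int_{S_r}ff_r .
\]
Now compute
\[
\frac{\Ndef'}{\Ndef}=\frac{\phi'}{\phi}+\frac{D'}{D}-\frac{H'}{H}
=\frac{(n-2)\phi'}{\phi}+\frac{\phi'}{\phi}-\frac{(n-1)\phi'}{\phi}+\frac{2\phi'\int f_r^2}{D}-\frac{2\int ff_r}{H}.
\]
The first three terms cancel, so
\[
\frac{\Ndef'}{\Ndef}=2\phi'\left(\frac{\int f_r^2}{D}-\frac{\int ff_r}{\phi' H}\right).
\]
The bound $D\le \phi'\int ff_r$ together with Cauchy–Schwarz, $(\int ff_r)^2\le H\int f_r^2$, gives
\[
\frac{\int f_r^2}{D}\ge\frac{\int f_r^2}{\phi'\int ff_r}\ge\frac{\int ff_r}{\phi' H}
\]
whenever $\int ff_r>0$. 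Hence $\Ndef'\ge0$. The case $\int ff_r=0$ forces $D=0$, and then $f$ is constant. I should double-check that the inequality direction in Step 2/3 is as claimed, since the error term must push the right way; if it does not, I would instead redefine things so that $E$ is absorbed via monotonicity of $\phi'$.

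\textbf{Step 4: the limit as $r\to0$.} Near the pole, $\phi(r)=r+O(r^3)$ and $\phi'\to1$, so the metric is Euclidean to second order. Write $f=P_k+O(|x|^{k+1})$ with $P_k$ a nonzero homogeneous polynomial of degree $k=\ord_p(f)$, harmonic for the Euclidean Laplacian at leading order. Then $D\sim\int_{B_r}|\nabla P_k|^2$ and $H\sim\int_{S_r}P_k^2$. The Euclidean computation gives
\[
\frac{rD}{H}\to k .
\]
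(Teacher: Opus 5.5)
Your proof is correct and is essentially the paper's argument. Your $D$, $H$, $\int_{S_r}f_r^2$ and $\int_{S_r}ff_r$ are the paper's $E_{\phi'}$, $F$, $A$ and $E$. Your $Y$- and $Z$-identities are exactly \eqref{eq:phiBint} and \eqref{eq:phiBparts}. Your log-derivative $\Ndef'/\Ndef=2\phi'A/E_{\phi'}-2E/F$, closed with $E_{\phi'}\le\phi'E$ (the sign you worried about is the right one) and Cauchy--Schwarz, is the same computation the paper records as the exact two-term formula for $\Ndef'$ in Theorem~\ref{thm:Nmono}.

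The one real difference is how you reach the Rellich-type identity. You get it in one stroke from the Pohozaev field of the closed conformal vector field $\phi\partial_r$, which nicely explains the weight $\phi'$. The paper instead integrates the Price-defect ODE of Proposition~\ref{1std}.

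In Step~4, treat $k=0$ separately, since there $\nabla P_0=0$ and your asymptotic for $D$ degenerates. In that case $E_{\phi'}=O(r^n)$ and $F(r)\sim f(p)^2|\Sph^{n-1}|r^{n-1}$ give the limit $0$ directly.
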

This existence result follows from Theorem \ref{thm:Nmono} and Proposition \ref{order}. We refer to Section \ref{Generalization1} for further details and for a discussion comparing this frequency function with Almgren's original frequency function in Euclidean space. Remarkably, this frequency function is derived from the theory of the Price equality for harmonic forms, developed by the author in collaboration with M. Stern in \cite{DS22}. By considering the harmonic $1$-form naturally associated with a harmonic function, this theory suggests introducing the \emph{Price defect} of a harmonic function; see Definition \ref{defect}. The monotonicity properties of this quantity ultimately govern the entire Almgren monotonicity package.

Theorem \ref{frequency} provides a frequency function on the real hyperbolic space. Interestingly, this frequency function is \emph{not} well suited to studying the convexity properties of the normalized $L^2$-mass of harmonic functions on a geodesic sphere. Recall that in Euclidean space there is a rich theory concerning the convexity properties of harmonic functions going back to Agmon's influential monograph \cite{Agm66} and the three-sphere theorem due to Korevaar--Meyers; see \cite{KM94}. Motivated by this circle of ideas, we introduce a \emph{different} frequency function (\emph{cf}. Proposition \ref{NewN} and Proposition \ref{hyperbolicF}) on the real hyperbolic space that allows us to generalize these convexity results. For example, we have the following hyperbolic three-sphere theorem.
\begin{theorem}\label{hyperbolic-Agmon}
Let $f$ be harmonic on $\mathbb H^n$, $n\ge3$. Define $F(r)=\int_{S_r}f^2d\sigma$. Then
\begin{equation}\notag
\frac{d^2}{dx^2}
\log\!\left(
\frac{F(r)}{\sinh^{n-1}r}
\right)\ge0,
\qquad
x=\log\tanh r.
\end{equation}
Equivalently, if $0<r_1<r<r_2$ and $\theta\in(0,1)$ is determined by
\begin{equation}\notag
\log\tanh r
=(1-\theta)\log\tanh r_1
+\theta\log\tanh r_2,
\end{equation}
then
\begin{equation}\label{eq:3sphere-hyperbolic}
\frac{F(r)}{\sinh^{n-1}r}
\le
\left(
\frac{F(r_1)}{\sinh^{n-1}r_1}
\right)^{1-\theta}
\left(
\frac{F(r_2)}{\sinh^{n-1}r_2}
\right)^\theta.
\end{equation}
\end{theorem}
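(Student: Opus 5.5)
Our plan is to expand $f$ in spherical harmonics and reduce the claim to a Cauchy--Schwarz (or Hölder) inequality for a sum of log-convex terms. In geodesic polar coordinates on $\mathbb H^n$ we write $f(r,\omega)=\sum_{k,j} a_{kj}\,R_k(r)\,Y_{kj}(\omega)$. Here $\{Y_{kj}\}$ is an $L^2(\Sph^{n-1})$-orthonormal basis of spherical harmonics of degree $k$, and $R_k$ is the solution, regular at the pole and normalized by $R_k(r)\sim r^k$, of the radial equation
\begin{equation}\notag
R''+(n-1)\coth r\,R'-\frac{k(k+n-2)}{\sinh^2 r}R=0 .
\end{equation}
Since $d\sigma=\sinh^{n-1}r\,d\omega$, orthonormality gives
\begin{equation}\notag
\frac{F(r)}{\sinh^{n-1}r}=\sum_{k,j}a_{kj}^2R_k(r)^2 .
\end{equation}
A sum of functions each of which is log-convex in $x$ is again log-convex in $x$, because log-convexity is preserved under addition (by Hölder). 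So it suffices to show that, for every $k\ge0$, the function $x\mapsto\log R_k(r)^2$ is convex, where $x=\log\tanh r$. Equivalently, for each $k$ we need $\frac{d}{dx}\log R_k$ to be nondecreasing. The three-sphere inequality \eqref{eq:3sphere-hyperbolic} then follows from convexity of $\log(F/\sinh^{n-1}r)$ in $x$ along the chord from $x_1$ to $x_2$.

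\textbf{Change of variables.} Since $dx/dr=1/(\sinh r\cosh r)$, we have $\frac{d}{dx}=\sinh r\cosh r\,\frac{d}{dr}$. Set $N_k(r)=\sinh r\cosh r\,R_k'/R_k$, which is the candidate ``frequency'' of this second kind and presumably what Propositions \ref{NewN} and \ref{hyperbolicF} encode. We must show $dN_k/dr\ge0$. We compute $N_k'$ using the Riccati equation for $u=R_k'/R_k$, namely
\begin{equation}\notag
u'=-u^2-(n-1)\coth r\,u+\frac{\lambda}{\sinh^2 r},\qquad \lambda=k(k+n-2).
\end{equation}
This gives
\begin{equation}\notag
N_k'=(\cosh^2 r+\sinh^2 r)u+\sinh r\cosh r\,u' .
\end{equation}
Substituting $u=N/(\sinh r\cosh r)$ and simplifying yields
\begin{equation}\notag
\sinh r\cosh r\,N'=\lambda\coth^2 r\cdot\sinh^2 r-N^2+N\bigl(\sinh^2 r-(n-2)\cosh^2 r\bigr),
\end{equation}
up to bookkeeping that must be checked. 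Here the prefactor $\lambda\coth^2 r\cdot\sinh^2 r$ is just $\lambda\cosh^2 r$.

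\textbf{Main obstacle.} We need to verify that this right-hand side is nonnegative along the actual solution, i.e.\ that $N$ stays between the roots of the quadratic $-N^2+bN+c$. The approach is a barrier or comparison argument. At $r=0$ we have $N(0)=k$, which matches the Euclidean frequency. As $r\to\infty$, $R_k$ is an explicit hypergeometric function. In fact $R_k=\tanh^k(r)\cdot {}_2F_1$ evaluated at $\tanh^2 r$ (Poisson-type formulas), and in the variable $t=\tanh r$ the claim becomes log-convexity in $\log t$ of a power series $t^{2k}\bigl(\sum_m c_m t^{2m}\bigr)^2$. Our first attempt will be this: show that the hypergeometric coefficients $c_m$ are nonnegative for $n\ge3$. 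Then $R_k^2=\sum_m b_m t^{2k+2m}$ with $b_m\ge0$, and any such sum is log-convex in $\log t$, being a positive combination of exponentials $e^{(2k+2m)x}$. This would bypass the Riccati analysis entirely. The key check is that the parameters of ${}_2F_1$ make all coefficients nonnegative; we expect parameters of the form $\bigl(\tfrac{k}{2}+\ldots,\tfrac{k+n-1}{2}\ldots;k+\tfrac n2\bigr)$, and verifying their signs is the step to be careful with. The hypothesis $n\ge3$ should enter here, or in the sign of $(n-2)$ in the Riccati route. If the coefficient signs fail, we fall back on the Riccati barrier argument above.
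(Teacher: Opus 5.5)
Your first route is correct, and it is genuinely different from the paper's. The paper never decomposes $f$. It proves that $N_{\mathbb H^n}=\sinh r\cosh r\,E/F$ is nondecreasing (Proposition \ref{NewN} with $\phi=\sinh$, via the integrated Price-defect identity of Lemma \ref{lem:intB}, the bound $\Bdef\ge(n-3)\lambda E$, and $E^2\le FA$), and then uses $\frac{d}{dx}\log(F/\sinh^{n-1}r)=2N_{\mathbb H^n}$.

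The one step you must get right is the hypergeometric form. Substituting $t=\tanh r$ and $R_k=t^k w(t^2)$ turns the radial equation into the hypergeometric equation in $z=t^2$ with $a=\tfrac k2$, $b=\tfrac{k+1}2$, $c=k+\tfrac n2$. Hence
\[
R_k(r)=\tanh^k r\;{}_2F_1\Bigl(\tfrac k2,\tfrac{k+1}2;k+\tfrac n2;\tanh^2 r\Bigr).
\]
The other Frobenius solution behaves like $t^{2-k-n}$ and is excluded by regularity at the pole. Your guessed $b=\tfrac{k+n-1}2$ belongs to the Pfaff-transformed form $\sinh^k r\,{}_2F_1(\tfrac k2,\tfrac{k+n-1}2;k+\tfrac n2;-\sinh^2 r)$. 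That series alternates and converges only for $\sinh r<1$, so the positivity argument must use the $\tanh^2 r$ form. There every coefficient is $\ge0$. Parseval and Tonelli then give $F/\sinh^{n-1}r=\sum_{\ell\ge0}\gamma_\ell e^{2\ell x}$ with $\gamma_\ell\ge0$, which is log-convex in $x$ by Cauchy--Schwarz. The Riccati fallback is unnecessary, and it would be delicate anyway, since $N_k(0)=k$ is exactly a root of your quadratic at $r=0$.

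Your argument gives more than the statement. It shows $F/\sinh^{n-1}r$ is a power series in $\tanh^2 r$ with nonnegative coefficients and lowest term $(\tanh^2 r)^{\ord_p(f)}$. So $F>0$ and $N_{\mathbb H^n}\to\ord_p(f)$ come for free, and $n\ge3$ is never used, contrary to your expectation. Its limitation is that, while the reduction to single modes works for any $\phi$, the positivity step is specific to $\phi=\sinh$.

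The paper's integral-identity argument needs only the two conditions on $\phi$ in Proposition \ref{NewN}. It therefore yields log-convexity in $\tau_\phi$ for every such warping function, together with a monotone frequency tied to the Price defect. The cost is that it requires $n\ge3$ (in the step $\Bdef\ge(n-3)\lambda E$) and the auxiliary inequality $D_n\ge0$.
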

For a proof, a detailed discussion, and a review of the Euclidean theory, we refer to Section \ref{Agmon}, in particular Theorem \ref{hyperbolic-Agmon}.

\vspace{0.2 in}

\noindent\textbf{Acknowledgments}. 
The author is grateful to M. Stern for more than a decade of inspiring collaboration and for all that he has learned from him along the way. He is partially supported by the Simons Foundation grant MPS-TSM-00013711.

\section{Price Defect for Harmonic Functions}\label{Pricef}

In this section, we recall definitions and results from \cite[Section 2]{DS22}. We specialize these results to harmonic $1$-forms, and we introduce the \emph{Price defect} of a harmonic function.

The geometric setup relevant for our analysis is as follows. Let $(M^n,g)$ be a complete and connected Riemannian manifold with a pole $p\in M$. By definition, the exponential map $\exp_p : T_pM\to M$ is a diffeomorphism. Denote by $r$ the distance function from $p$:
\[
r(x)=d(p,x),
\]
and write
\[
B_r=B_r(p),\qquad S_r=\partial B_r(p).
\]
The function $r$ is smooth on $M\setminus\{p\}$. We denote by $h$ the second fundamental form of $S_r(p)$. Fix a local orthonormal frame $\{e_j\}^{n-1}_{j=1}$ tangent to $S_r(p)$, with dual coframe $\{\omega^j\}^{n-1}_{j=1}$. The radial vector field $\partial_r$ is orthogonal to $S_r(p)$; we denote its dual $1$-form by $ dr$. Given a $1$-form $\phi$, let $e(\phi)$ denote exterior multiplication on the left by $\phi$. Let $e^*(\phi)$ denote the adjoint operator. With this setup, recall that for $X$ and $Y$
tangent to $S_r$,
\[
h(X,Y):=g(\nabla_X\partial_r,Y).
\]
Thus, we write
\[
\nabla_{e_i}dr=h_{i1}\omega^1+\ldots+h_{in-1}\omega^{n-1},
\]
and
\begin{equation}\label{Q}
e(\omega^j)e^*(\nabla_{e_j}dr)=\sum_{j,k<n}h_{jk}e(\omega^j)e^*(\omega^k)=:Q.
\end{equation}
The operator $Q$ defined by Equation \eqref{Q} is the natural extension of the second fundamental
form to an endomorphism of forms of arbitrary degree. Here we need this operator only on $1$-forms. Finally, we set the
mean curvature of the geodesic sphere $S_r$ to be the trace $H=\sum_k h_{kk}$.

Next, we consider the harmonic $1$-form $\alpha=df$, where $f:M\to \R$ is harmonic function, \emph{i.e.,} $\Delta_g f=0$. Following \cite[Equations 11 \& 12]{DS22}, we define the following functions:
\begin{equation}\label{mu}
\mu_{df}(r):=\frac{\displaystyle\int_{S_r(p)}|e^*(dr)df|^2d\sigma}
{\displaystyle\int_{S_r(p)}|df|^2d\sigma},
\end{equation}

\begin{equation}\label{q}
q_{df}(r):=\frac{\displaystyle\int_{S_r(p)}\left\langle\left(\frac{H}{2}-Q\right)df,df\right\rangle d\sigma}
{\displaystyle\int_{S_r(p)}|df|^2d\sigma}.
\end{equation}
In this notation, \cite[Equation 13]{DS22} becomes
\begin{equation}\label{origindefect}
\int_{B_R(p)}q_{df}(r)|df|^2dv=\frac{1}{2}\int_{S_R(p)}(1-2\mu_{df}(R))|df|^2d\sigma.
\end{equation}
Equation \eqref{origindefect} is the Price-type identity, introduced by the author and Stern in \cite{DS22}, that underlies their formulation of the Donnelly-Xavier inequality \cite{DX84}; compare also \cite{Pri83}. This point of view suggests the following definition.
\begin{definition}\label{defect}
Let $f: M\to \R$ be a harmonic function on $(M,g)$. We define the real-valued function
\[
\Bdef(r):=\int_{S_r(p)}(1-2\mu_{df}(r))|df|^2d\sigma
\]
to be the \textbf{Price defect} of the harmonic function $f$.
\end{definition}

We conclude this section by studying the limiting behavior of $\mu_{df}(r)$ as $r\to 0$.  We denote by $k=\ord_p(f)$ the order of vanishing of $f$ at $p$. The next Lemma is a refinement of \cite[Lemma 18]{DS22}.

\begin{lemma}
Let $(M^n,g)$ be a Riemannian manifold and $p\in M$. Let $f: M\to \R$ be a harmonic function $k=\ord_p(f)$. We then have
\[
\lim_{r\to 0}\mu_{df}(r)=\frac{k}{2k+n-2}.
\]
\end{lemma}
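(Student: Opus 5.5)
Since $e^*(dr)df=\partial_r f$, the quantity $\mu_{df}(r)$ is the ratio of $\int_{S_r}(\partial_r f)^2\,d\sigma$ to $\int_{S_r}|df|^2\,d\sigma$. The plan is to reduce this ratio, as $r\to 0$, to the same ratio for the leading homogeneous harmonic polynomial of $f$ at $p$. We then compute that Euclidean ratio exactly using spherical harmonics.

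\textbf{Step 1: reduction to a homogeneous harmonic polynomial.} Work in normal coordinates $x$ centered at $p$, so that $g_{ij}=\delta_{ij}+O(|x|^2)$ and $r=|x|$. Geodesic spheres are then Euclidean spheres, and $d\sigma=(1+O(r^2))\,d\sigma_{\mathrm{Euc}}$. Since $k=\ord_p(f)\ge 1$ (for $k=0$ we have $df(p)\neq0$ unless $f$ is constant; I assume $f$ is nonconstant), Taylor expansion gives
\[
f=P_k+O(|x|^{k+1}),\qquad df=dP_k+O(|x|^{k}),
\]
where $P_k\not\equiv0$ is homogeneous of degree $k$. Because $\Delta_g=\Delta_{\mathrm{Euc}}+(\text{terms involving }O(|x|^2)\partial^2+O(|x|)\partial)$, the lowest-order part of $\Delta_g f=0$ gives $\Delta_{\mathrm{Euc}}P_k=0$. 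Note also that $\partial_r=\frac{x}{|x|}\cdot\partial_x$ exactly in normal coordinates (Gauss lemma), and $|df|_g^2=|dP_k|^2_{\mathrm{Euc}}+O(r^{2k-1})$.

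Since $dP_k$ is homogeneous of degree $k-1$ and does not vanish identically on $S^{n-1}$, we get $\int_{S_r}|dP_k|^2\,d\sigma=c\,r^{2k-2+n-1}$ with $c>0$. Every error term is smaller by a factor $O(r)$. Hence
\[
\lim_{r\to0}\mu_{df}(r)=\frac{\int_{S^{n-1}}(\partial_rP_k)^2}{\int_{S^{n-1}}|\nabla P_k|^2}.
\]

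\textbf{Step 2: the Euclidean computation.} On $S^{n-1}$ write $P_k=Y$, a spherical harmonic of degree $k$. Then $\partial_rP_k=kY$, and the tangential gradient is $\nabla_{S}Y$. Integrating by parts on the sphere, using $\Delta_{S}Y=-k(k+n-2)Y$, gives
\[
\int_{S^{n-1}}|\nabla_SY|^2=k(k+n-2)\int_{S^{n-1}} Y^2.
\]
Therefore the limit in Step 1 equals
\[
\frac{k^2}{k^2+k(k+n-2)}=\frac{k}{2k+n-2}.
\]

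The main obstacle is the bookkeeping in Step 1. One must check that the metric corrections to $\partial_r$, $|\cdot|_g$ and $d\sigma$, together with the higher-order Taylor terms of $f$, really are of relative order $O(r)$. This uses the homogeneity degrees and the nonvanishing of $\int_{S^{n-1}}|\nabla P_k|^2$. One also needs the standard fact that the leading term of a $g$-harmonic function in normal coordinates is Euclidean harmonic.
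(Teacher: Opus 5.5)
Your argument is correct for $k\ge1$ and is essentially the paper's proof: normal coordinates, Euclidean-harmonicity of the leading term $P_k$, and the limit computed as $k^2\epsilon_k/\bigl(k(2k+n-2)\epsilon_k\bigr)$. The only difference is that you derive $\int_{S^{n-1}}|\nabla P_k|^2=k(2k+n-2)\epsilon_k$ from the eigenvalue $k(k+n-2)$ of $\Delta_S$, whereas the paper just quotes the identity.

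One correction concerns your parenthetical on $k=0$, which is false under the paper's convention ($\ord_p(f)=0$ iff $f(p)\neq0$, as used in the proof of Proposition \ref{order}). For example, $f=1+x_1x_2$ on $\R^n$ has $f(p)\neq0$ and $df(p)=0$ but is nonconstant. More to the point, the lemma itself fails for $k=0$ and nonconstant $f$. Since $\mu_{df}$ only sees $df$, its limit is $m/(2m+n-2)>0$ with $m=\ord_p(f-f(p))\ge1$, not $0$.

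So the statement should be read with $f(p)=0$, or equivalently with $k=\ord_p(f-f(p))$. Instead of asserting $k\ge1$, you should say this and apply your argument to $f-f(p)$. The paper's proof makes the same tacit restriction, since for $k=0$ its two identities degenerate to $0/0$.
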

\begin{proof}
Given $p\in M$, consider geodesic normal coordinates $(x_1, ..., x_n)$ around it. Let $r=|x|=d_{g}(p, x)$. In these coordinates, we have
\[
g_{ij}=\delta_{ij}+O(r^2), \quad \Gamma^k_{ij}(x)=O(r), \quad  \Delta_{g}=\Delta_{\mathrm{Euc}}+O(r)\partial+O(r^2)\partial^2,
\]
where 
\[
\Delta_{\mathrm{Euc}}=\sum^n_{i=1}\frac{\partial^2}{\partial x^2_i}.
\]
In these coordinates, we also have
\[
dv=(1+O(r^2))dv_{\mathrm{Euc}}, \quad d\sigma=(1+O(r^2))d\sigma_{\mathrm{Euc}}.
\]
Next, consider the Taylor expansion of $f$ centered at $p$ in geodesic normal coordinates. We have
\[
f(x_1, ..., x_{n}) = \sum_{i\geq k}P_i(x_1, ..., x_n), 
\]
where each $P_i$ is a homogeneous polynomials of degree $i$. Since $P_k$ is the first nonzero homogeneous term and $f$ is harmonic, $\Delta_{\mathrm{Euc}}P_k=0$. For harmonic homogeneous polynomials of degree $k$ in Euclidean space, we have the following identities:
\[
\int_{S_r}|\nabla P_k|^2d\sigma_{\mathrm{Euc}}=k(2k+n-2)\epsilon_k r^{2k+n-3}, \quad \int_{S_r}(\partial_r P_k)^2d\sigma_{\mathrm{Euc}}=k^2\epsilon_k r^{2k+n-3},
\]
where 
\[
\epsilon_k=\int_{S_1}P^2_k d\sigma_{\mathrm{Euc}}.
\]
We conclude that 
\[
\mu_{df}(r)=\frac{k}{2k+n-2}+O(r)
\]
and the proof is complete.
\end{proof}

\section{Almgren's Frequency Function: Background and Generalizations}\label{Generalization1}

In this section, we introduce several quantities inspired by Almgren's frequency function on $\R^n$. We derive the relation between those quantities and the functions $\Bdef(r)$, $q_{df}(r)$, and $\mu_{df}(r)$ (see Equations \eqref{mu}, \eqref{q}, and \eqref{origindefect}).

\subsection{Basic Identities}
Let $(M^n,g)$ be complete and simply connected with
\[
\sec_g\leq 0.
\]
By Cartan--Hadamard \cite[Theorem 6.2.2]{Petersen}, the exponential map at every point is a diffeomorphism, and every $p\in M$ is a pole.

Let $f: M\to\R$ be harmonic. On $S_r$ write
\[
f_r:=\partial_r f=\langle\nabla f,\partial_r\rangle,
\qquad
\nabla^T f:=\nabla f-f_r\partial_r.
\]
Thus
\[
|\nabla f|^2=f_r^2+|\nabla^T f|^2.
\]
We will use extensively the following quantities, defined for $r\geq 0$:
\begin{align}
F(r)&:=\int_{S_r}f^2d\sigma,\label{eq:F}\\
E(r)&:=\int_{B_r}|\nabla f|^2 dv,\label{eq:E}\\
A(r)&:=\int_{S_r}f_r^2 d\sigma,\label{eq:A}\\
T(r)&:=\int_{S_r}|\nabla^T f|^2 d\sigma,\label{eq:T}\\
D(r)&:=A(r)+T(r)=\int_{S_r}|\nabla f|^2 d\sigma.\label{eq:D}
\end{align}

The following proposition links the quantities above to the Price defect introduced in Section \ref{Pricef}.

\begin{proposition}\label{identity}
We have the following identities
\begin{equation}\label{eq:Bdef}
\mu_{df}(r)=\frac{A(r)}{D(r)}, \quad \Bdef(r):=T(r)-A(r)
=\int_{S_r}\bigl(|\nabla^T f|^2-f_r^2\bigr) d\sigma.
\end{equation}
\end{proposition}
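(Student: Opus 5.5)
The plan is to unwind the definitions of Section \ref{Pricef} using the musical isomorphism between $df$ and $\nabla f$, and the orthogonal decomposition of $T_xM$ along $S_r$ into $\R\,\partial_r\oplus T_xS_r$. Everything is pointwise algebra followed by integration over $S_r$; no analytic input is needed.

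The first step is to identify the integrand in \eqref{mu}. For a $1$-form, $e^*(dr)$ is interior multiplication by the dual vector $\partial_r$, so
\[
e^*(dr)\,df=df(\partial_r)=\langle\nabla f,\partial_r\rangle=f_r .
\]
Hence $|e^*(dr)df|^2=f_r^2$. Moreover $|df|^2=|\nabla f|^2=f_r^2+|\nabla^T f|^2$, since $\nabla^T f$ is orthogonal to $\partial_r$. Integrating over $S_r$ turns the numerator of \eqref{mu} into $A(r)$ and the denominator into $A(r)+T(r)=D(r)$. This gives $\mu_{df}(r)=A(r)/D(r)$.

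The second step is the Price defect. By Definition \ref{defect}, $\mu_{df}(r)$ is constant on $S_r$, so it factors out of the integral:
\[
\Bdef(r)=(1-2\mu_{df}(r))\,D(r)=D(r)-2A(r)=T(r)-A(r).
\]
Writing $T$ and $A$ as integrals over $S_r$ yields $\int_{S_r}\bigl(|\nabla^T f|^2-f_r^2\bigr)d\sigma$.

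There is no real obstacle. The only point needing care is the convention that $e^*(dr)$, the adjoint of $e(dr)$, acts on $1$-forms as contraction with $\partial_r$; this holds because $dr$ has unit length. One should also note the implicit assumption $D(r)\neq0$, which holds for nonconstant $f$ by unique continuation. With that convention in place the computation is immediate.
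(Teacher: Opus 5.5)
Your proposal is correct and matches the paper's proof essentially verbatim. You identify $|e^*(dr)df|^2=f_r^2$ and $|df|^2=|\nabla f|^2=f_r^2+|\nabla^T f|^2$ to get $\mu_{df}=A/D$, and then pull the radial quantity $\mu_{df}(r)$ out of the integral in Definition~\ref{defect} to obtain $\Bdef=D-2A=T-A$; your side remarks on the contraction convention and on $D(r)\neq 0$ for nonconstant $f$ are extra care that the paper leaves implicit.
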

\begin{proof}
First, notice that
\[
|\nabla f|^2=|df|^2,\quad  f^2_r=|e^*(dr)df|^2.
\]
Equation \eqref{mu} then gives the first identity. For the second, use the first identity and Definition \ref{defect} to obtain:
\[
\Bdef(r)=D(r)-2A(r)=T(r)-A(r).
\]
\end{proof}

Next, we derive an identity for the first derivative of the Price defect of a harmonic function.

\begin{proposition}\label{identity2}
Let $f: M\to\R$ be a harmonic function on $(M, g)$. We have
\begin{equation}\label{Bdefderivative}
\Bdef'(r)=2q_{df}(r)D(r).
\end{equation}
\end{proposition}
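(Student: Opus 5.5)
The plan is to differentiate the Price identity \eqref{origindefect} in $R$. With Definition \ref{defect}, \eqref{origindefect} reads
\[
\int_{B_R(p)} q_{df}(r)\,|df|^2\,dv=\tfrac12\,\Bdef(R).
\]
Since $r$ is smooth away from the pole and $q_{df}$ depends only on $r$, the coarea formula with $|\nabla r|=1$ gives
\[
\int_{B_R} q_{df}|df|^2\,dv=\int_0^R q_{df}(s)\Big(\int_{S_s}|df|^2d\sigma\Big)ds=\int_0^R q_{df}(s)D(s)\,ds.
\]
Hence $\Bdef(R)=2\int_0^R q_{df}(s)D(s)\,ds$. If the integrand is continuous in $s$, the fundamental theorem of calculus gives $\Bdef'(R)=2q_{df}(R)D(R)$.

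To justify this, I will first check regularity. For $s>0$ the spheres $S_s$ are smooth hypersurfaces varying smoothly, so $D(s)$ and $\int_{S_s}\langle(\tfrac H2-Q)df,df\rangle d\sigma=q_{df}(s)D(s)$ are smooth in $s>0$. One subtlety is that $q_{df}$ is a ratio and $D(s)$ could vanish on an isolated sphere. To avoid this, I will work directly with the numerator $\int_{S_s}\langle(\tfrac H2-Q)df,df\rangle d\sigma$, which is what the Price identity really integrates, so no division is needed. Near $s=0$, $H\sim (n-1)/s$ while $D(s)=O(s^{n-1})$, so the integrand is $O(s^{n-2})$ and integrable. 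This makes the representation of $\Bdef$ valid for all $R>0$.

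For completeness, or if one prefers not to rely on \eqref{origindefect} as a black box, I would re-derive it from the Rellich–Pohozaev computation. Integrate $\operatorname{div}\big(\tfrac12|\nabla f|^2\partial_r-f_r\nabla f\big)$ over $B_R$, using $\Delta f=0$ and $\nabla^2 r=h$ on tangential directions. This yields
\[
\tfrac12\int_{S_R}(|\nabla f|^2-2f_r^2)\,d\sigma=\int_{B_R}\Big(\tfrac H2|\nabla f|^2-h(\nabla^Tf,\nabla^Tf)\Big)dv .
\]
The integrand on the right is exactly $\langle(\tfrac H2-Q)df,df\rangle$, because $Q$ acting on $1$-forms is $h$ applied to the tangential part. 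Differentiating in $R$ then gives the claim directly.

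The main obstacle is only the careful justification of the derivative: the behavior at the pole, where $H$ blows up, and smoothness of the integrand in $R$. Both are handled by the estimates above.
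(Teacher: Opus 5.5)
Your proposal is correct and takes the same route as the paper, whose proof applies the co-area formula to the Price identity \eqref{origindefect} and differentiates in $R$. Your extra steps go beyond what the paper's one-line proof states, and they are sound: you work with the numerator $\int_{S_s}\langle(\tfrac H2-Q)df,df\rangle\,d\sigma$ to avoid dividing by $D$, you check integrability near the pole, and you give an optional Rellich--Pohozaev rederivation of \eqref{origindefect}.
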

\begin{proof}
The co-area formula applied to Equation \eqref{origindefect} gives
\[
\Bdef'(r)=2q_{df}(r)\int_{S_r}|\nabla f|^2d\sigma=2q_{df}(r)D(r).
\]
\end{proof}

We conclude by stating two elementary relations between $E$, $D$, $F$, and $A$.

\begin{lemma}
We have the following
\begin{equation}\label{eq:Eprime}
E'(r)=D(r),
\end{equation}
and 
\begin{equation}\label{eq:CS}
E(r)^2\le F(r)A(r).
\end{equation}
\end{lemma}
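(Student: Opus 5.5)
Both relations follow from two standard tools: the co-area formula in geodesic polar coordinates about $p$, and the divergence theorem on $B_r$ combined with $\Delta_g f=0$.

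\emph{Identity \eqref{eq:Eprime}.} Since $p$ is a pole, the distance function $r$ is smooth on $M\setminus\{p\}$ and satisfies $|\nabla r|=1$ there. The co-area formula therefore gives
\[
E(r)=\int_{B_r}|\nabla f|^2dv=\int_0^r\left(\int_{S_s}|\nabla f|^2d\sigma\right)ds=\int_0^r D(s)\,ds.
\]
The integrand $D(s)$ is continuous in $s$, because $f$ is smooth and the spheres $S_s$ vary smoothly for $s>0$. Moreover $D(s)\to0$ as $s\to0$, since $|\nabla f|$ is bounded near $p$ and $|S_s|\to0$. The fundamental theorem of calculus then yields $E'(r)=D(r)$ for every $r>0$.

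\emph{Inequality \eqref{eq:CS}.} The first step is to convert $E$ into a boundary integral. Since $\Delta_g f=0$, we have
\[
\operatorname{div}(f\nabla f)=|\nabla f|^2+f\Delta_g f=|\nabla f|^2 .
\]
Apply the divergence theorem on $B_r$, whose outward unit normal along $S_r$ is $\partial_r$. This gives
\[
E(r)=\int_{S_r}f\langle\nabla f,\partial_r\rangle\,d\sigma=\int_{S_r}f f_r\,d\sigma .
\]
Although $r$ is not smooth at $p$, the ball $B_r$ is a smooth domain, being the diffeomorphic image under $\exp_p$ of a Euclidean ball. So there is no difficulty at the pole, and the divergence theorem applies directly. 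The Cauchy--Schwarz inequality on $S_r$ now gives
\[
E(r)^2\le\left(\int_{S_r}f^2d\sigma\right)\left(\int_{S_r}f_r^2d\sigma\right)=F(r)A(r).
\]

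The only point requiring care is the regularity at the pole, which is handled by the two observations above. With those in place, both statements reduce to the co-area formula, the divergence theorem, and Cauchy--Schwarz.
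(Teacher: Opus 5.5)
Your proposal is correct and takes the same route as the paper. The co-area formula with $|\nabla r|=1$ gives $E'=D$. Green's identity with $\Delta_g f=0$ gives $E(r)=\int_{S_r}ff_r\,d\sigma$, and Cauchy--Schwarz on $S_r$ then yields $E^2\le FA$. Your remarks on regularity at the pole (continuity of $D$, and $B_r=\exp_p$ of a Euclidean ball being a smooth domain) just make explicit what the paper leaves implicit.
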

\begin{proof}
The first identity follows from the co-area formula and the definition of $E(r)$. For the inequality, observe that Green's identity gives
\[
E(r)=\int_{B_r}|\nabla f|^2 dv
=\int_{S_r}f\partial_r f d\sigma-\int_{B_r}f\Delta f dv=\int_{S_r}ff_rd\sigma,
\]
since $f$ is harmonic. We conclude by applying the Cauchy--Schwarz inequality on $S_r$.
\end{proof}

\subsection{Price defect on rotationally symmetric manifolds}\label{rot}

We equip $\R^n$ with a complete rotationally symmetric metric
\[
g_{\mathrm{rot}}=dr^2+\phi(r)^2g_{\Sph^{n-1}}
\]
where
\[
\phi(0)=0,
\qquad
\phi'(0)=1,
\qquad
\phi(r)>0\quad(r>0).
\]
The radial and tangential sectional curvatures are
\begin{equation}\label{eq:modelcurv}
K_{\mathrm{rad}}(r)=-\frac{\phi''(r)}{\phi(r)},
\qquad
K_{\mathrm{tan}}(r)=\frac{1-(\phi'(r))^2}{\phi(r)^2};
\end{equation}
see \cite{Petersen}.  If the rotationally symmetric metric is Cartan--Hadamard (\emph{i.e.,} $sec_g\leq 0$), then
\[
\phi''\ge0,
\qquad
\phi'\ge1.
\]
Every geodesic sphere is totally umbilic, with
\begin{equation}\label{eq:lambda}
h|_{TS_r}=\lambda(r)g|_{TS_r},
\qquad
\lambda(r):=\frac{\phi'(r)}{\phi(r)}\ge0,
\end{equation}
and
\[
H(r)=(n-1)\lambda(r).
\]

\begin{proposition}\label{1std}
Let $n\ge3$ and let $f$ be harmonic on $(\R^n, g_{\mathrm{rot}})$.  Then
\begin{equation}\label{eq:Brot}
\Bdef'(r)
=\lambda(r)\bigl[(n-1)A(r)+(n-3)T(r)\bigr],
\end{equation}
and equivalently,
\begin{equation}\label{eq:Brot2}
\Bdef'(r)
=2(n-2)\lambda(r)A(r)+(n-3)\lambda(r)\Bdef(r).
\end{equation}
In particular, if $\lambda(r)\ge0$, then
\[
\Bdef'(r)\ge0.
\]
\end{proposition}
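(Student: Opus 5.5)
We compute $\Bdef'(r)$ from Proposition \ref{identity2}, which gives $\Bdef'(r)=2q_{df}(r)D(r)$. By the definition \eqref{q} of $q_{df}$, this reads
\[
\Bdef'(r)=2\int_{S_r}\left\langle\left(\tfrac{H}{2}-Q\right)df,df\right\rangle d\sigma .
\]
So the whole task is to evaluate the pointwise quadratic form $\langle(\frac H2-Q)df,df\rangle$ on the umbilic spheres of the rotationally symmetric metric.

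First, decompose $df=f_r\,dr+\sum_{j<n}(e_jf)\,\omega^j$. By \eqref{eq:lambda}, $h_{jk}=\lambda\delta_{jk}$, so \eqref{Q} gives
\[
Q=\lambda\sum_{j<n}e(\omega^j)e^*(\omega^j).
\]
On $1$-forms this operator kills $dr$ and acts as the identity on each $\omega^j$. Hence $Q\,df=\lambda\,(df)^T$, where $(df)^T$ is the tangential part, dual to $\nabla^Tf$. It follows that
\[
\left\langle Q\,df,df\right\rangle=\lambda|\nabla^Tf|^2.
\]
Using $H=(n-1)\lambda$, we then get pointwise
\[
\left\langle\left(\tfrac H2-Q\right)df,df\right\rangle=\tfrac{(n-1)\lambda}{2}\bigl(f_r^2+|\nabla^Tf|^2\bigr)-\lambda|\nabla^Tf|^2=\tfrac{\lambda}{2}\bigl[(n-1)f_r^2+(n-3)|\nabla^Tf|^2\bigr].
\]
Because $\lambda$ depends only on $r$, it comes out of the integral over $S_r$. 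Multiplying by $2$ and integrating then yields \eqref{eq:Brot}, with $A$ and $T$ as in \eqref{eq:A} and \eqref{eq:T}.

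For \eqref{eq:Brot2}, we substitute $T=\Bdef+A$ from Proposition \ref{identity}:
\[
(n-1)A+(n-3)(\Bdef+A)=2(n-2)A+(n-3)\Bdef .
\]
The final claim follows from \eqref{eq:Brot} rather than \eqref{eq:Brot2}. When $n\ge3$ and $\lambda\ge0$, both coefficients $n-1$ and $n-3$ are nonnegative, and $A,T\ge0$, so $\Bdef'\ge0$.

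The only step needing care is the sign and normalization conventions in $Q$. These are: that $e^*(\omega^k)$ is contraction against the dual vector; that $Q$ has no $dr$ component (the sum in \eqref{Q} runs only over $j,k<n$); and that $q_{df}$ carries the factor $\frac H2$ and not $H$. I would check these against the Euclidean case $\phi=r$ with $f$ a homogeneous harmonic polynomial of degree $k$. There the identity must be consistent with the formulas in the lemma computing $\lim\mu_{df}$. Those formulas give
\[
A=k^2\epsilon_k r^{2k+n-3},\qquad T=k(k+n-2)\epsilon_k r^{2k+n-3},
\]
so $\Bdef=k(n-2)\epsilon_k r^{2k+n-3}$, and one verifies directly that $\Bdef'=\frac1r[(n-1)A+(n-3)T]$.

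A secondary point is justifying Proposition \ref{identity2} itself, that is, differentiating \eqref{origindefect} in $R$. Since $q_{df}D$ is continuous for $r>0$, the co-area formula makes this immediate.
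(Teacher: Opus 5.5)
Your proposal is correct and follows the paper's proof essentially step for step. You evaluate $\langle(\tfrac H2-Q)df,df\rangle$ pointwise using umbilicity, so that $Q$ acts as $\lambda$ times the tangential projection, then integrate via Proposition \ref{identity2}, substitute $T=A+\Bdef$, and read off the sign from \eqref{eq:Brot} using $A,T\ge0$. Your explicit justification that $Q\,df=\lambda(df)^T$ and your Euclidean consistency check are both correct additions the paper leaves implicit; in the check both sides equal $k(n-2)(2k+n-3)\epsilon_k r^{2k+n-4}$.
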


\begin{proof}
 We have
\begin{align*}
\frac{H(r)}{2}|d f|^2
-Q(df,df)&=\frac{H(r)}{2}|d f|^2
-Q(d^Tf,d^Tf)\\ \notag
&=\frac{(n-1)\lambda}{2}(f_r^2+|\nabla^Tf|^2)
-\lambda|\nabla^Tf|^2.
\end{align*}
Using Proposition \ref{identity2}, we compute
\[
\Bdef'(r)=2q_f(r)D(r)
=\lambda(r)\left[(n-1)A(r)+(n-3)T(r)\right].
\]
Since $T=A+\Bdef$, elementary rearrangement gives Equation \eqref{eq:Brot2}.  $A,T\ge0$, so $\Bdef'(r)$ is nonnegative whenever $\lambda\ge0$.
\end{proof}

More importantly, the normalized Price defect is also monotonic.

\begin{corollary}\label{cor:Qrot}
Let $n\ge3$ and let $f$ be harmonic on $(\R^n, g_{\mathrm{rot}})$. We have
\begin{equation}\label{eq:Qrot}
 \left(\phi(r)^{3-n}\Bdef(r)\right)'
 =2(n-2)\phi(r)^{2-n}\phi'(r)A(r).
\end{equation}
In particular, if $\lambda(r)\ge0$, then
$\phi(r)^{3-n}\Bdef(r)$ is nondecreasing.
\end{corollary}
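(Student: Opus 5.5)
The identity \eqref{eq:Qrot} follows directly from Proposition \ref{1std} by the product rule, with no new analytic input. I will differentiate $\phi(r)^{3-n}\Bdef(r)$ as a product:
\[
\left(\phi^{3-n}\Bdef\right)'=(3-n)\phi^{2-n}\phi'\,\Bdef+\phi^{3-n}\Bdef'.
\]
Then I substitute the second form \eqref{eq:Brot2} of Proposition \ref{1std}, namely $\Bdef'=2(n-2)\lambda A+(n-3)\lambda\Bdef$, together with $\lambda=\phi'/\phi$ from \eqref{eq:lambda}. This gives $\phi^{3-n}\lambda=\phi^{2-n}\phi'$. The term $(n-3)\phi^{2-n}\phi'\Bdef$ therefore cancels exactly against $(3-n)\phi^{2-n}\phi'\Bdef$, and what remains is $2(n-2)\phi^{2-n}\phi' A$, which is \eqref{eq:Qrot}.

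The choice of the exponent $3-n$ is designed precisely to make this cancellation happen. In effect, $\phi^{3-n}$ is the integrating factor for the linear differential inequality \eqref{eq:Brot2}, since $\phi^{3-n}=\exp\!\left(-(n-3)\int\lambda\right)$. The only care needed is the domain: everything is done for $r>0$, where $\phi>0$, $r$ is smooth away from the pole, and $\Bdef$ is differentiable by Proposition \ref{identity2}.

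For monotonicity, note that $A(r)=\int_{S_r}f_r^2\,d\sigma\ge0$, $n\ge3$, and $\phi>0$. The hypothesis $\lambda\ge0$ is equivalent to $\phi'\ge0$ because $\phi>0$. Hence the right-hand side of \eqref{eq:Qrot} is nonnegative, and $\phi^{3-n}\Bdef$ is nondecreasing on $(0,\infty)$. I see no real obstacle here; the only points to check are the sign bookkeeping in the cancellation and the smoothness of $\Bdef$ for $r>0$, which the earlier propositions already provide.
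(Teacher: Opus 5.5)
Your proof is correct and is essentially the paper's argument: the paper also applies the product rule to $\phi^{3-n}\Bdef$ and cancels the $(n-3)\lambda\Bdef$ term. The only cosmetic difference is that it substitutes \eqref{eq:Brot} together with $\Bdef=T-A$ rather than \eqref{eq:Brot2}, and it gets the sign from $\phi'\ge1$ on a Cartan--Hadamard model rather than from $\lambda\ge0\iff\phi'\ge0$.
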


\begin{proof}
Starting from Proposition~\ref{1std}, we have
\[
\Bdef'
=\lambda\bigl[(n-1)A+(n-3)T\bigr],
\qquad
\lambda=\frac{\phi'}\phi.
\]
Since $\Bdef=T-A$,
\begin{align*}
\left(\phi^{3-n}\Bdef\right)'
&=\phi^{3-n}\left[\Bdef'+(3-n)\lambda\Bdef\right]\\
&=\phi^{3-n}\lambda
\left[(n-1)A+(n-3)T-(n-3)(T-A)\right]\\
&=2(n-2)\phi^{3-n}\lambda A\\
&=2(n-2)\phi^{2-n}\phi' A.
\end{align*}
On a Cartan--Hadamard model, $\phi'\ge1$, so the right-hand side is nonnegative.
\end{proof}

\subsection{Almgren's frequency function in rotational symmetry}

By Corollary~\ref{cor:Qrot}, we have the natural monotone quantity
\[
\Qdef_{\mathrm{rot}}(r):=\phi(r)^{3-n}\Bdef(r).
\]
Let
\[
F(r)=\int_{S_r}f^2d\sigma
\]
as before, and define the normalized spherical mass of $f$
\begin{equation}\label{normmass}
\mathcal H(r):=\frac{F(r)}{\phi(r)^{n-1}}.
\end{equation}  
The first variation of $F(r)$ is easily computed
\begin{equation}\label{eq:Fprime-rot}
F'(r)
=2\int_{S_r}ff_rd\sigma
+(n-1)\lambda(r)F(r).
\end{equation}
Using Green's identity, we then have
\begin{equation}\label{Fprime}
F'(r)=2E(r)+(n-1)\lambda(r)F(r).
\end{equation}

We also need an integrated form of the defect equation.

\begin{lemma}\label{lem:intB}
On $(\R^n, g_{\mathrm{rot}})$, we have the following identities:
\begin{equation}\label{eq:phiBprime}
(\phi(r)\Bdef(r))'
=(n-2)\phi'(r)E'(r)
\end{equation}
\begin{equation}\label{eq:phiBint}
\phi(r)\Bdef(r)
=(n-2)\int_0^r\phi'(s)E'(s)ds
\end{equation}
\begin{equation}\label{eq:phiBparts}
\frac{\Bdef(r)}{n-2}
=\lambda(r)E(r)
-\frac1{\phi(r)}\int_0^r\phi''(s)E(s)ds.
\end{equation}
\end{lemma}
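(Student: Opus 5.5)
We derive \eqref{eq:phiBprime} directly from Proposition~\ref{1std}, then integrate it, and finally integrate by parts.

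\emph{Step 1: the derivative identity.} By the product rule and Equation \eqref{eq:Brot},
\[
(\phi\Bdef)'=\phi'\Bdef+\phi\Bdef'=\phi'(T-A)+\phi\lambda\bigl[(n-1)A+(n-3)T\bigr].
\]
Since $\phi\lambda=\phi'$, the right-hand side equals
\[
\phi'\bigl[(T-A)+(n-1)A+(n-3)T\bigr]=\phi'\bigl[(n-2)A+(n-2)T\bigr]=(n-2)\phi' D.
\]
By Equation \eqref{eq:Eprime}, $D=E'$, and this gives \eqref{eq:phiBprime}.

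\emph{Step 2: integration.} Integrate \eqref{eq:phiBprime} from $\epsilon$ to $r$ and let $\epsilon\to0$. This requires $\phi(\epsilon)\Bdef(\epsilon)\to0$. Since $f$ and $\nabla f$ are smooth at the pole $p$, $|\nabla f|$ is bounded near $p$, so $|\Bdef(\epsilon)|\le D(\epsilon)=O(\mathrm{vol}(S_\epsilon))=O(\epsilon^{n-1})$. Also $\phi(\epsilon)\to0$, so the product tends to zero. The integrand $\phi'E'$ is continuous on $[0,r]$, so the integral converges, and \eqref{eq:phiBint} follows.

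\emph{Step 3: integration by parts.} Integrate by parts in \eqref{eq:phiBint}:
\[
\int_0^r\phi'E'\,ds=\phi'(r)E(r)-\phi'(0)E(0)-\int_0^r\phi''E\,ds.
\]
Here $E(0)=0$. Dividing by $(n-2)\phi(r)$ and using $\lambda=\phi'/\phi$ gives \eqref{eq:phiBparts}.

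The only step needing care is the boundary term at $r=0$ in Step 2, which is settled by the smoothness estimate above. The rest is algebra already contained in Proposition~\ref{1std}.
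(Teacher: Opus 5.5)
Your proof is correct and is essentially the paper's argument. The paper also derives \eqref{eq:phiBprime} from Proposition~\ref{1std}, using the equivalent form \eqref{eq:Brot2} together with $2A+\Bdef=D=E'$ instead of \eqref{eq:Brot}, then integrates from $0$ and integrates by parts with $E(0)=0$; your check that $\phi(\epsilon)\Bdef(\epsilon)\to0$ just makes explicit the boundary term at the pole, which the paper covers only by noting $\phi(0)=0$.
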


\begin{proof}
From Equation \eqref{eq:Brot2}, we obtain
\[
\Bdef'+\lambda\Bdef
=(n-2)\lambda(2A+\Bdef).
\]
But
\[
2A+\Bdef=2A+(T-A)=A+T=D=E'.
\]
Since $\phi\lambda=\phi'$, multiplication by $\phi$ gives Identity \eqref{eq:phiBprime}.  Because $E(0)=0$ and $\phi(0)=0$, integration from $0$ to $r$ proves Equation \eqref{eq:phiBint}.  Integrating the right-hand side by parts yields
\[
\phi\Bdef
=(n-2)\left(\phi'E-\int_0^r\phi''E ds\right),
\]
which is equivalent to Equation \eqref{eq:phiBparts}.
\end{proof}

Equation \eqref{eq:phiBint} suggests introducing a weighted Dirichlet energy. 

\begin{definition}[$\phi^\prime$-weighted Dirichlet energy]\label{def:EM}
For $n\ge3$, we define the $\phi^\prime$-weighted Dirichlet energy of a harmonic function $f$ by
\begin{equation}\label{eq:Ephiprime}
E_{\phi^\prime}(r)
:=\int_{B_r}\phi^\prime(s)|\nabla f|^2dv=\int^r_0\phi^\prime(s)E^\prime(s)ds.
\end{equation}
\end{definition}

Notice that in Euclidean space, $E_{\phi^\prime}$ reduces to the usual Dirichlet energy. We are now ready to introduce the global Almgren frequency function.

\begin{definition}[Price defect Almgren frequency]\label{def:N}
Let $n\ge3$ and let $f$ be harmonic on $(\R^n, g_{\mathrm{rot}})$. If $F(r)>0$, we define
\begin{equation}\label{eq:Ndef}
\Ndef(r)
:=\frac1{n-2}\frac{\phi(r)^{3-n}\Bdef(r)}{\mathcal H(r)}
=\frac{\phi(r)^2}{n-2}\frac{\Bdef(r)}{F(r)}.
\end{equation}
\end{definition}

The relation between this frequency function and Almgren's original frequency function on Euclidean space is better understood in terms of the $\phi^\prime$-weighted Dirichlet energy. More precisely, Equation \eqref{eq:phiBint} gives that
\[
\phi(r)\Bdef(r)=(n-2)E_{\phi^{\prime}}(r).
\]
Thus, we obtain the tantalizing identity
\[
\Ndef(r)=\phi(r)\frac{E_{\phi^{\prime}}(r)}{F(r)}
\]
that clearly reduces to the Euclidean Almgren's frequency function on Euclidean space; see Section \ref{Eucl} for a more detailed discussion. We now show that $\Ndef(r)$ obeys a monotonicity formula on rotationally symmetric Cartan-Hadamard manifolds.

\begin{theorem}\label{thm:Nmono}
Let $n\geq 3$ and let $f$ be a nonzero harmonic function on $(\R^n, g_{\mathrm{rot}})$.  Then
\begin{equation}\label{eq:Nprime}
\Ndef'(r)
=
\frac{2\phi(r)\phi'(r)}{F(r)^2}
\bigl(A(r)F(r)-E(r)^2\bigr)
+
\frac{2\phi(r)E(r)}{F(r)^2}
\int_0^r\phi''(s)E(s)ds.
\end{equation}
Hence, if
\[
\phi'\ge0,
\qquad
\phi''\ge0,
\]
then
\[
\Ndef'(r)\ge0.
\]
In particular, $\Ndef$ is nondecreasing on every Cartan--Hadamard $(\R^n, g_{\mathrm{rot}})$.
\end{theorem}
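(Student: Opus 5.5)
We will differentiate the expression $\Ndef(r)=\phi(r)^2\Bdef(r)/((n-2)F(r))$ directly, using Lemma~\ref{lem:intB} to eliminate $\Bdef$ and $\Bdef'$. We will not differentiate $\phi E_{\phi'}/F$ with its integral term. By Equation~\eqref{eq:phiBparts},
\[
\frac{\phi^2\Bdef}{n-2}=\phi\phi' E-\phi\int_0^r\phi''E\,ds=:\phi\phi'E-\phi I(r).
\]
Writing $G(r)=\phi\phi'E-\phi I$, we have $\Ndef=G/F$ and $\Ndef'=(G'F-GF')/F^2$.

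Next we compute $G'$. Since $I'=\phi''E$ and $E'=D=A+T$,
\[
G'=(\phi'^2+\phi\phi'')E+\phi\phi'(A+T)-\phi' I-\phi\phi''E=\phi'^2E+\phi\phi'(A+T)-\phi'I.
\]
This contains $T$, which must be removed, so we rewrite $T=A+\Bdef$. Equation~\eqref{eq:phiBparts} gives
\[
\Bdef=(n-2)\Big(\lambda E-\tfrac{1}{\phi}I\Big),
\qquad
\phi\phi'\Bdef=(n-2)(\phi'^2E-\phi' I).
\]
Hence
\[
G'=2\phi\phi'A+(n-1)(\phi'^2E-\phi'I).
\]

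For $F'$ we use Equation~\eqref{Fprime}, namely $F'=2E+(n-1)\lambda F$. Then
\[
GF'=2\phi\phi'E^2-2\phi EI+(n-1)\lambda F(\phi\phi'E-\phi I)=2\phi\phi'E^2-2\phi EI+(n-1)F(\phi'^2E-\phi'I).
\]
Subtracting, the $(n-1)$ terms cancel exactly, leaving
\[
G'F-GF'=2\phi\phi'(AF-E^2)+2\phi EI.
\]
Dividing by $F^2$ gives Equation~\eqref{eq:Nprime}.

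For the sign, the first term is nonnegative by the Cauchy--Schwarz inequality \eqref{eq:CS} together with $\phi,\phi'\ge0$. In the second term, $E\ge0$, and $I=\int_0^r\phi''E\,ds\ge0$ because $\phi''\ge0$, so it is nonnegative as well.

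One point needs care: $F(r)>0$ for all $r>0$ when $f\neq0$. If $F(r_0)=0$, then $f\equiv0$ on $S_{r_0}$, so by the maximum principle $f\equiv0$ on $B_{r_0}$, and by unique continuation $f\equiv0$ everywhere. Thus $\Ndef$ is defined and smooth on $(0,\infty)$.

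The main obstacle is purely algebraic: arranging the substitutions so that the $T$ and $(n-1)$ terms cancel. The key choice is to express $\Bdef$ through \eqref{eq:phiBparts} rather than through $E_{\phi'}$, and to remove $T$ using $T=A+\Bdef$.
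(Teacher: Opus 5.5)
Your proposal is correct and is essentially the paper's proof. Both differentiate the quotient $\phi^2\Bdef/((n-2)F)$, use \eqref{Fprime} for $F'$, and rewrite $\Bdef$ via \eqref{eq:phiBparts}; the paper inserts \eqref{eq:Brot2} for $\Bdef'$ first, so the $\lambda\Bdef$ terms cancel, and substitutes \eqref{eq:phiBparts} only at the end, whereas you substitute first and differentiate $\phi\phi'E-\phi I$ directly. Your check that $F(r)>0$ for $r>0$, via the maximum principle and unique continuation, is a useful detail the paper leaves implicit.
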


\begin{proof}
Differentiate \eqref{eq:Ndef}.  Since $\lambda=\phi'/\phi$,
\begin{align*}
\Ndef'
&=\frac{\phi^2}{(n-2)F}
\left[
\Bdef'+\left(2\lambda-\frac{F'}F\right)\Bdef
\right].
\end{align*}
Use Equations \eqref{Fprime} and \eqref{eq:Brot2} to write:
\[
\frac{F'}F=(n-1)\lambda+2\frac EF,
\]
\[
\Bdef'=2(n-2)\lambda A+(n-3)\lambda\Bdef.
\]
Substituting these identities into the formula for $\Ndef'$ yields
\begin{align*}
\Ndef'
&=\frac{\phi^2}{(n-2)F}
\left[
2(n-2)\lambda A-2\frac EF\Bdef
\right]\\
&=\frac{2\phi^2}{F}
\left[
\lambda A-\frac{E\Bdef}{(n-2)F}
\right].
\end{align*}
Now substituting Equation \eqref{eq:phiBparts}
\[
\frac{\Bdef}{n-2}
=\lambda E-\frac1\phi\int_0^r\phi''(s)E(s)ds
\]
yields
\begin{align*}
\Ndef'
&=\frac{2\phi^2}{F}
\left[
\lambda A-\lambda\frac{E^2}{F}
+\frac{E}{\phi F}\int_0^r\phi''(s)E(s)ds
\right]\\
&=
\frac{2\phi\phi'}{F^2}(AF-E^2)
+
\frac{2\phi E}{F^2}\int_0^r\phi''(s)E(s)ds,
\end{align*}
which is the identity claimed in Equation \eqref{eq:Nprime}.  The first term is nonnegative by Equation \eqref{eq:CS}, and the second is nonnegative when $\phi''\ge0$ because $E\ge0$.
\end{proof}

Next, we study the limit of $\Ndef(r)$ as $r\to 0$. Given $p\in M$ and a harmonic function $f$ on $M$, we denote by $\ord_p(f)$ the order of vanishing of $f$ at $p$.

\begin{proposition}[Order of vanishing]\label{order}
Let $n\geq 3$. If $f$ is nonzero harmonic and $k=\ord_{p}(f)$, then
\begin{equation}\label{eq:Nlimit}
\lim_{r\to 0}\Ndef(r)=k.
\end{equation}
\end{proposition}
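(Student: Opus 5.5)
We prove the limit directly from the representation $\Ndef(r)=\phi(r)E_{\phi'}(r)/F(r)$, which follows from Equation \eqref{eq:phiBint}, together with the leading-order Taylor behaviour of $f$ at the pole. We fix polar (equivalently geodesic normal) coordinates at $p$. As in the proof of the Lemma computing $\lim_{r\to0}\mu_{df}(r)$, we write $f=P_k+O(|x|^{k+1})$ with $P_k\neq0$ a Euclidean harmonic homogeneous polynomial of degree $k$. We also set $\epsilon_k=\int_{S_1}P_k^2\,d\sigma_{\mathrm{Euc}}>0$.

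\textbf{Step 1 (spherical mass).} Since $\phi(r)=r+O(r^2)$, the metric satisfies $d\sigma=(1+O(r))d\sigma_{\mathrm{Euc}}$ on $S_r$. Hence
\[
F(r)=\int_{S_r}f^2d\sigma=\epsilon_k r^{2k+n-1}(1+O(r)).
\]
For $k\ge1$ this uses $f=P_k+O(r^{k+1})$. If $k=0$, then $P_0=f(p)\neq0$ and the same formula holds, with $\epsilon_0=f(p)^2|\Sph^{n-1}|$.

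\textbf{Step 2 (weighted energy).} We similarly have $\nabla f=\nabla P_k+O(r^k)$ and $dv=(1+O(r))dv_{\mathrm{Euc}}$. The Euclidean identity $\int_{S_s}|\nabla P_k|^2d\sigma_{\mathrm{Euc}}=k(2k+n-2)\epsilon_k s^{2k+n-3}$ then gives
\[
E'(s)=D(s)=k(2k+n-2)\epsilon_k s^{2k+n-3}(1+O(s)).
\]
When $k=0$, the leading term vanishes and $D(s)=O(s^{n-1})$. Because $\phi'(s)=1+O(s)$, we get
\[
E_{\phi'}(r)=\int_0^r\phi'(s)E'(s)ds=k\,\epsilon_k r^{2k+n-2}(1+O(r)),
\]
using $(2k+n-2)\int_0^r s^{2k+n-3}ds=r^{2k+n-2}$. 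For $k=0$ we instead get $E_{\phi'}(r)=O(r^{n})$.

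\textbf{Step 3 (conclusion).} Combining the two steps,
\[
\Ndef(r)=\frac{(r+O(r^2))\,k\epsilon_k r^{2k+n-2}(1+O(r))}{\epsilon_k r^{2k+n-1}(1+O(r))}=k+O(r).
\]
For $k=0$ the same computation gives $\Ndef(r)=O(r^{n+1})/r^{n-1}\to0$. This proves \eqref{eq:Nlimit}.

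\textbf{Main obstacle.} The only delicate point is controlling the error terms uniformly. Specifically, we must check that the $O(r^{k+1})$ remainder of $f$ and the $O(r^k)$ remainder of $\nabla f$ in normal coordinates only contribute relative errors of order $O(r)$. They enter through cross terms with $P_k$, which are of relative size $O(r)$, and through squared remainders, which are of relative size $O(r^2)$. We must also check that the metric distortions are $1+O(r)$. This is the same bookkeeping already carried out in the proof of the Lemma on $\mu_{df}$, so we would simply mirror it.

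As an alternative route, one could write $\Ndef=\phi^2\Bdef/((n-2)F)$ and use $\Bdef=D(1-2\mu_{df})$ together with that Lemma. Since $\mu_{df}\to k/(2k+n-2)$, this gives $1-2\mu_{df}\to(n-2)/(2k+n-2)$, and combining with Step 1 and the asymptotics of $D$ yields the same limit $k$. This version makes the dependence on the earlier Lemma explicit.
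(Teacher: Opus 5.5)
Your proposal is correct and follows essentially the same route as the paper. Both arguments use $\Ndef=\phi E_{\phi'}/F$ together with the leading-order asymptotics $F(r)=\epsilon_k r^{2k+n-1}(1+O(r))$ and $E_{\phi'}(r)=k\epsilon_k r^{2k+n-2}(1+O(r))$ from the harmonic homogeneous polynomial $P_k$, and both treat $k=0$ separately. The differences are cosmetic: you integrate the spherical identity for $|\nabla P_k|^2$ instead of quoting the ball identity, you use the weaker $\phi=r+O(r^2)$, and you give the sharper bound $E_{\phi'}=O(r^n)$ when $k=0$. Your bookkeeping of the error terms is, if anything, more explicit than the paper's.
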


\begin{proof}
If $k=0$, then $f(p)\ne0$.  One has $F(r)\sim c_0r^{n-1}$ while $E_{\phi^\prime}(r)=O(r^{n-1})$, so the limit is zero.  Assume $k\ge1$.  Smoothness of the rotationally symmetric metric at the pole gives
\[
\phi(r)=r+O(r^3).
\]
For harmonic homogeneous polynomials of degree $k$ in Euclidean space, we have the following identities:
\[
\int_{B_r}|\nabla P_k|^2dv_{\mathrm{Euc}}=k\epsilon_k r^{2k+n-2}, \quad \int_{S_r}P_k^2d\sigma_{\mathrm{Euc}}=\epsilon_k r^{2k+n-1},
\]
where 
\[
\epsilon_k=\int_{S_1}P^2_k d\sigma_{\mathrm{Euc}}.
\]
Thus, we have
\[
F(r)=\epsilon_k r^{2k+n-1}(1+O(r)),
\]
\[
E_{\phi^\prime}(r)=k\epsilon_k r^{2k+n-2}(1+O(r)).
\]
Therefore
\[
\Ndef(r)=k+O(r),
\]
which proves Equation \eqref{eq:Nlimit}.
\end{proof}

\subsection{Euclidean interpretation of the normalized defect and frequency}\label{Eucl}

In this section, we retrace the construction of the frequency function on Euclidean space $\R^n$ with the standard metric, \emph{i.e.}, when $\phi(r)=r$. Along the way, we compare the Price defect approach with the usual derivation of the frequency function presented in the standard analysis textbooks (see, for example, \cite[Chapter 6]{CM11}).

In Euclidean space, the Almgren frequency function is usually introduced as follows. Given a Euclidean harmonic function $f:\R^n\to \R$, one considers the scale-invariant Dirichlet energy and the normalized spherical $L^2$-mass of $f$:
\[
\frac{1}{r^{n-2}}\int_{B_r}|\nabla f|^2dv_{\mathrm{Euc}},\quad  \frac{1}{r^{n-1}}\int_{S_r}f^2d\sigma_{\mathrm{Euc}}.
\]
Almgren's frequency function is then introduced as the ratio of the scale-invariant Dirichlet energy to the normalized spherical mass of $f$. The monotonicity of the scale-invariant Dirichlet energy is derived as a consequence of the Rellich identity (see, for example, \cite[Equation 6.10]{CM11}). More precisely, Rellich's formula \cite{Rel40} provides a crucial identity for the derivative of the scale-invariant Dirichlet energy that is key for the proof of the monotonicity of the frequency function.

The Price defect approach starts with Equation \eqref{origindefect} together with the definition of the Price defect of a harmonic function $\Bdef(r)$. The normalized Price defect satisfies
\begin{equation}\label{eq:Qrot-Euclidean}
\Qdef_{\mathrm{rot}}(r)
:=r^{3-n}\Bdef(r)
=(n-2)r^{2-n}E(r).
\end{equation}
Thus, up to the constant factor $n-2$, $\Qdef_{\mathrm{rot}}$ is exactly the classical scale-invariant Dirichlet energy $r^{2-n}E(r)$. The monotonicity of $\Qdef_{\mathrm{rot}}(r)$ is a direct consequence of Equation \eqref{origindefect} (see Proposition \ref{identity2}). On $\R^n$, we have

\[
\mathcal H_{\R^n}(r):=\frac{F(r)}{r^{n-1}},
\]
so that
\begin{equation}\label{eq:N-Euclidean-quotient}
\Ndef(r)
=\frac1{n-2}\frac{\Qdef_{\mathrm{rot}}(r)}{\mathcal H_{\R^n}(r)}
=r\frac{E(r)}{F(r)},
\end{equation}
which is exactly the classical Almgren frequency. The Price defect proof of the monotonicity of this quantity follows by direct computation together with an integrated form of the defect equation; see Lemma \ref{lem:intB}. In particular, the whole monotonicity package follows from Equation \eqref{origindefect} and its differentiated and integrated versions.

One may wonder if this circle of ideas recovers the Rellich identity as well. This is the case. Indeed, one can observe that the integrated version of the Price defect in Euclidean space, see Equation \eqref{eq:phiBint} in Lemma \ref{lem:intB}, is nothing other than Rellich's formula:
\begin{align*}
r\Bdef(r)&=r\int_{S_r}|(\nabla^Tf|^2-f^2_r)d\sigma_{\mathrm{Euc}} \\ 
&=r\int_{S_r}(|\nabla f|^2-2f^2_r)d\sigma_{\mathrm{Euc}}=(n-2)\int_{B_r}|\nabla f|^2dv_{\mathrm{Euc}}.
\end{align*}
This is exactly \cite[Equation 6.10]{CM11}. In conclusion, Equation \eqref{origindefect} contains the whole monotonicity package, and it provides a \emph{global} frequency function not only on Euclidean space, but more generally on any rotationally symmetric Cartan-Hadamard manifold. Along the way, it also recovers the appropriate Rellich identity on these spaces.

\section{Agmon and Korevaar--Meyers convexity in rotational symmetry}\label{Agmon}

In this section, we seek an appropriate generalization of the classical logarithmic convexity of the $L^2$-mass of a Euclidean harmonic function on rotationally symmetric Cartan-Hadamard manifolds. This circle of ideas suggests a different frequency function than the one given in Theorem \ref{thm:Nmono}. While we are not able to prove that this new frequency is always monotonic, the theory of the Price inequality and the estimate on the Price defect we developed in Section \ref{Generalization1} are enough to prove monotonicity for certain choices of $\phi$. This result covers, in particular, the case $\phi(r)=\sinh(r)$, yielding a different frequency function on real hyperbolic space. This frequency function allows us to study the convexity properties of the $L^2$-mass of hyperbolic harmonic functions and to derive a hyperbolic three-sphere theorem.

\subsection{Agmon and Korevaar-Meyers convexity in Euclidean space}

We recall the classical logarithmic convexity theorem for the normalized spherical $L^2$-mass of a Euclidean harmonic function. This result is often referred to as the \emph{three-spheres inequality}; see, for example, the Wikipedia entry \href{https://en.wikipedia.org/wiki/Three_spheres_inequality}{``Three spheres inequality''}.

The classical reference for convexity properties of harmonic functions on Euclidean space seems to date back to 
Agmon's influential monograph \cite{Agm66}. The logarithmic convexity for the normalized $L^2$-mass for harmonic functions, which we recall below, is due to Korevaar--Meyers; see
\cite{KM94}, and also Brummelhuis \cite{Bru93}.  Let $f$ be harmonic on a Euclidean ball and write
\[
F(r)=\int_{S_r}f^2d\sigma_{\mathrm{Euc}},
\qquad
\mathcal H_{\R^n}(r)=\frac{F(r)}{r^{n-1}}.
\]
This is the normalized spherical $L^2$-mass of $f$ on $S_r$.

\begin{theorem}[Agmon and Korevaar-Meyers convexity]\label{thm:Agmon-Euclidean}
Let $f$ be harmonic on $\R^n$. Then
\begin{equation}\label{eq:Agmon-Euclidean-convexity}
\frac{d^2}{d(\log r)^2}\log \mathcal H_{\R^n}(r)\ge0,
\qquad 0<r<\infty.
\end{equation}
Equivalently, if $0<r_1<r<r_2<\infty$ and $\theta\in(0,1)$ is determined by
\[
\log r=(1-\theta)\log r_1+\theta\log r_2,
\]
then
\begin{equation}\label{eq:Agmon-Euclidean-three-sphere}
\mathcal H_{\R^n}(r)
\le
\mathcal H_{\R^n}(r_1)^{1-\theta}
\mathcal H_{\R^n}(r_2)^\theta.
\end{equation}
\end{theorem}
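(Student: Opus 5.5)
The plan is to deduce the convexity from the monotonicity of the classical Almgren frequency $N(r)=rE(r)/F(r)$. This is exactly $\Ndef$ with $\phi(r)=r$, by Equation \eqref{eq:N-Euclidean-quotient}, and it is nondecreasing by Theorem \ref{thm:Nmono}. In the Euclidean case the second term in Equation \eqref{eq:Nprime} vanishes since $\phi''=0$. So $N'(r)=\frac{2r}{F^2}(AF-E^2)\ge0$ by Cauchy--Schwarz \eqref{eq:CS}.

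First compute the logarithmic derivative of $\mathcal H_{\R^n}$. By Equation \eqref{Fprime} with $\lambda=1/r$ we have $F'=2E+\frac{n-1}{r}F$. Hence
\[
\frac{d}{dr}\log\mathcal H_{\R^n}(r)=\frac{F'}{F}-\frac{n-1}{r}=\frac{2E}{F}.
\]
Setting $t=\log r$ gives $\frac{d}{dt}\log\mathcal H_{\R^n}=r\cdot\frac{2E}{F}=2N(r)$. Differentiating once more,
\[
\frac{d^2}{dt^2}\log\mathcal H_{\R^n}=2r N'(r)\ge0,
\]
which is \eqref{eq:Agmon-Euclidean-convexity}. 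This requires $F(r)>0$ for all $r>0$, so that the logarithm is defined. This holds for nonzero $f$: if $F(r_0)=0$, then $f\equiv0$ on $S_{r_0}$, so $f\equiv0$ on $B_{r_0}$ by the maximum principle, and then $f\equiv0$ everywhere by unique continuation. The case $f\equiv0$ is trivial for the inequality and is excluded from the derivative statement.

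The three-sphere inequality \eqref{eq:Agmon-Euclidean-three-sphere} is then the standard consequence of convexity. The function $\psi(t)=\log\mathcal H_{\R^n}(e^t)$ is convex on $\R$. With $t_i=\log r_i$ and $t=(1-\theta)t_1+\theta t_2$, convexity gives $\psi(t)\le(1-\theta)\psi(t_1)+\theta\psi(t_2)$, and exponentiating yields the claim. Conversely, the three-sphere inequality for all triples implies midpoint convexity, and hence convexity of the continuous function $\psi$, which gives the equivalence.

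The only real point needing care is the positivity of $F$ and smoothness of $\log\mathcal H_{\R^n}$. Everything else is a direct consequence of Theorem \ref{thm:Nmono} and Equation \eqref{Fprime}. The key structural fact is that the derivative of $\log\mathcal H_{\R^n}$ in the variable $\log r$ is twice the frequency. The hyperbolic analogue should follow by finding the variable ($x=\log\tanh r$) in which the same holds for a modified frequency.
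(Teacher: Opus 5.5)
Your proof is correct and follows the paper's argument: the $\log r$-derivative of $\log\mathcal H_{\R^n}$ is $2rE/F$, which is twice Almgren's frequency, so convexity reduces to the monotonicity of $rE/F$. Your extra checks ($F>0$ for $f\not\equiv0$, and the equivalence between convexity and the three-sphere inequality) supply details the paper leaves implicit; one small caveat is that Theorem~\ref{thm:Nmono} is stated only for $n\ge3$, since its Price-defect derivation divides by $n-2$, so for $n<3$ you should instead cite the classical Almgren monotonicity via Rellich's identity, which gives the same formula $N'=2r(AF-E^2)/F^2$ in every dimension.
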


Equation \eqref{eq:Agmon-Euclidean-three-sphere} is sometimes referred to as the three-sphere theorem. The point of theoretical interest is that Theorem \ref{thm:Agmon-Euclidean} is an immediate consequence of
Almgren monotonicity. Indeed, Equation \eqref{Fprime} with $\phi(r)=r$ gives
\[
\frac{d}{dr}\log\mathcal H_{\R^n}(r)=2\frac{E(r)}{F(r)},
\]
and therefore
\begin{equation}\label{eq:Agmon-slope-Euclidean}
\frac12\frac{d}{d\log r}\log\mathcal H_{\R^n}(r)
=r\frac{E(r)}{F(r)}.
\end{equation}
Thus the logarithmic derivative of the normalized mass is exactly Almgren's
frequency, and Theorem \ref{thm:Agmon-Euclidean} follows from the monotonicity of Almgren's frequency function.

\subsection{A convexity-inspired frequency function}

Consider  $\R^n$ with a rotationally symmetric metric
\[
g_{\mathrm{rot}}=dr^2+\phi(r)^2g_{\Sph^{n-1}}
\]
where
\[
\phi(0)=0,
\qquad
\phi'(0)=1,
\qquad
\phi(r)>0\quad(r>0).
\]
If the rotationally symmetric metric is Cartan--Hadamard (\emph{i.e.,} $sec_g\leq 0$), then
\[
\phi''\ge0,
\qquad
\phi'\ge1.
\]
For a harmonic function $f: (\R^n, g_{\mathrm{rot}})\to \R$, we define
\[
F(r)=\int_{S_r}f^2d\sigma,
\qquad
E(r)=\int_{B_r}|\nabla f|^2dv,
\]
and set
\begin{equation}\label{eq:H-hyperbolic}
\mathcal H_{\phi}(r):=\frac{F(r)}{\phi(r)^{n-1}}.
\end{equation}
Up to the fixed constant, this is the mean-square value of $f$ on $S_r$.

For a general model function $\phi$, Equation
\eqref{Fprime} and Definition \eqref{normmass} give
\begin{equation}\label{log-derivative}
\frac{d}{dr}\log\mathcal H_{\phi}(r)=2\frac{E(r)}{F(r)}.
\end{equation}
Consequently, it is tantalizing to introduce a new radial coordinate $\tau_\phi$ by setting
\begin{equation}\label{eq:xphi-Agmon}
\frac{d\tau_\phi}{dr}=\frac1{\phi(r)\phi^\prime(r)},
\end{equation}
so that
\[
\frac{d}{d\tau_\phi}=\phi\phi^\prime\frac{d}{dr}.
\]
We then have
\begin{equation}\notag
\frac12\frac{d}{d\tau_\phi}\log\mathcal H_{\phi}(r)
=\phi(r)\phi'(r)\frac{E(r)}{F(r)}.
\end{equation}
This motivates the introduction of a different frequency function
\begin{equation}\label{newF}
\widetilde{\Ndef}(r)
:=\phi(r)\phi'(r)\frac{E(r)}{F(r)}.
\end{equation}
$\widetilde{\Ndef}$ arises as the logarithmic derivative of the normalized $L^2$-mass of $f$ over a sphere, and as such we have $\widetilde{\Ndef}^\prime(r)\geq 0$ if and only if $\log\mathcal H_{\phi}$ is convex as a function of $\tau_\phi$. Thus, if one could establish the monotonicity of the new frequency function defined in Equation \eqref{newF}, then the convexity arguments of Agmon and Korevaar--Meyers could be transplanted verbatim to rotationally symmetric Cartan--Hadamard manifolds. Unfortunately, such monotonicity does not hold for arbitrary choices of $\phi(r)$. Nevertheless, under suitable geometric assumptions, we obtain the following monotonicity result, whose proof relies on the Price defect.

\begin{proposition}\label{NewN}
Let $f$ be a nonzero harmonic function on a Cartan-Hadamard $(\R^n, g_{\mathrm{rot}})$.  
If
\[
\Big(\frac{\phi'}{\phi}\Big)^\prime\leq0,
\qquad
\phi^{\prime\prime}(r)\phi^{n-2}(r)\geq (n-2)\phi^\prime(r)\int^r_0\phi^{\prime\prime}(s)\phi^{n-3}(s)ds
\]
for all $r>0$, then
\[
\widetilde{\Ndef}^\prime(r)\ge0.
\]
\end{proposition}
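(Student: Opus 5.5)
The plan is to differentiate $\widetilde{\Ndef}=\phi\phi' E/F$ directly and reduce the sign of $\widetilde{\Ndef}'$ to two pieces: a Cauchy--Schwarz term, and a one-dimensional integral inequality.

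\textbf{Step 1: the derivative identity.} I will use $E'=D$, and Equation \eqref{Fprime} in the form $F'/F=(n-1)\lambda+2E/F$. This gives
\[
F^2\,\widetilde{\Ndef}'=(\phi'^2+\phi\phi'')EF+\phi\phi' DF-(n-1)\phi'^2EF-2\phi\phi'E^2 .
\]
Next I write $D=2A+\Bdef$ and substitute Equation \eqref{eq:phiBparts} for $\Bdef$, namely
\[
\phi\phi'\Bdef=(n-2)\phi'^2E-(n-2)\phi'\int_0^r\phi''E\,ds .
\]
The $\phi'^2EF$ terms cancel, since $1+(n-2)-(n-1)=0$. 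I expect to arrive at
\[
F^2\,\widetilde{\Ndef}'(r)=2\phi\phi'\bigl(AF-E^2\bigr)+F\Bigl[\phi(r)\phi''(r)E(r)-(n-2)\phi'(r)\int_0^r\phi''(s)E(s)\,ds\Bigr].
\]
The first term is nonnegative by Equation \eqref{eq:CS}, exactly as in the proof of Theorem \ref{thm:Nmono}. So everything reduces to showing that the bracket is nonnegative.

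\textbf{Step 2: reduce the bracket to a growth bound on $E$.} Suppose we know a comparison of the form
\[
E(s)\le E(r)\Bigl(\frac{\phi(s)}{\phi(r)}\Bigr)^{n-3}\qquad (0<s<r),
\]
that is, $E/\phi^{n-3}$ is nondecreasing. Using $\phi''\ge0$, this gives
\[
\int_0^r\phi''E\,ds\le E(r)\,\phi(r)^{3-n}\int_0^r\phi''\phi^{n-3}\,ds .
\]
Then the second hypothesis of the proposition, multiplied by $E(r)\phi(r)^{3-n}$, is exactly the needed inequality
\[
\phi\phi''E\ge(n-2)\phi'\int_0^r\phi''E\,ds .
\]
So the second hypothesis is tailored to this step, and the remaining task is the monotonicity of $E/\phi^{n-3}$.

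\textbf{Step 3: the main obstacle.} Monotonicity of $E/\phi^{n-3}$ is equivalent to
\[
\phi(r)D(r)\ge(n-3)\phi'(r)E(r).
\]
This is where I expect the hypothesis $(\phi'/\phi)'\le0$, i.e. $\lambda$ nonincreasing, to be used, together with the Price defect package.

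\begin{itemize}
\item From Lemma \ref{lem:intB}, $\phi\Bdef=(n-2)E_{\phi'}\ge0$, and $D=2A+\Bdef\ge\Bdef$. Hence $\phi D\ge(n-2)E_{\phi'}$.
\item It therefore suffices to compare $E_{\phi'}=\int_0^r\phi' E'\,ds$ with $\phi' E$ up to the factor $(n-3)/(n-2)$.
\item I would try to obtain this comparison from Corollary \ref{cor:Qrot}, i.e. from $\phi^{3-n}\Bdef$ being nondecreasing. Writing $E$ as an integral of $D\ge\Bdef$ and using that $\lambda$ decreases should allow one to pull $\phi'(s)/\phi(s)\ge\phi'(r)/\phi(r)$ out of the integrals.
\item If this direct route fails, I would instead try an ODE comparison for $G=\phi D-(n-3)\phi'E$, using $G(0)=0$ and expressing $G'$ through Equation \eqref{eq:Brot2}.
\end{itemize}

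If the exponent $n-3$ cannot be reached, the fallback is to run Step 2 with whatever power $\phi^{m}$ of growth for $E$ this argument yields, and to check which hypothesis on $\phi$ is then needed.

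Finally, for the intended application $\phi=\sinh$, one should check that both hypotheses hold. This is a routine hyperbolic-function computation:
\begin{itemize}
\item $\lambda=\coth r$ is decreasing;
\item $\sinh^{n-1}r\ge(n-2)\cosh r\int_0^r\sinh^{n-2}s\,ds$ holds because both sides vanish at $0$ and the derivative of the left side dominates.
\end{itemize}
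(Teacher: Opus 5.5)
Your Steps 1 and 2 match the paper's proof: your identity for $F^2\widetilde{\Ndef}'$ is the paper's intermediate formula, and the reduction via $\phi''\ge0$ and the second hypothesis is the same. The gap is Step 3, which you leave unproved. Monotonicity of $E/\phi^{n-3}$ is the only place the hypothesis $(\phi'/\phi)'\le0$ enters, and neither of your suggested routes works as stated.

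\emph{The Corollary route.} The monotonicity of $\phi^{3-n}\Bdef$ from Corollary~\ref{cor:Qrot} only encodes $\Bdef'\ge(n-3)\lambda\Bdef$. You need an upper bound on $E=\int_0^r(2A+\Bdef)\,ds$ in terms of $\Bdef(r)$, and that monotonicity gives no control of $\int A$. Replacing $D$ by the smaller $\Bdef$ inside $E$ yields a lower bound on $E$, which is the wrong direction.

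\emph{The ODE route.} Your function $G=\phi D-(n-3)\phi'E$ has a derivative involving $D'=A'+T'$. The Price-defect identities do not control this; they only give $\Bdef'=T'-A'$.

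\emph{The intermediate comparison.} The inequality $(n-2)E_{\phi'}\ge(n-3)\phi'E$ is not free either. Since $\phi'$ is nondecreasing, the trivial bound $E_{\phi'}\le\phi'E$ goes the wrong way.

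The missing idea is to keep the $A$-term in \eqref{eq:Brot2} and compare $\Bdef'$ with $E'=2A+\Bdef$ rather than with $\Bdef$:
\[
\Bdef'-(n-3)\lambda E'=2(n-2)\lambda A+(n-3)\lambda\Bdef-(n-3)\lambda(2A+\Bdef)=2\lambda A\ge0 .
\]
Integrate from $0$; note $\Bdef(0^+)=0$ because $|\Bdef|\le D=O(r^{n-1})$. Using $E'\ge0$ and $\lambda(s)\ge\lambda(r)$ for $s\le r$ gives
\[
\Bdef(r)\ge(n-3)\int_0^r\lambda(s)E'(s)\,ds\ge(n-3)\lambda(r)E(r).
\]
Equivalently, the correct comparison function is $G:=\Bdef-(n-3)\lambda E$. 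It satisfies $G'=2\lambda A-(n-3)\lambda'E\ge0$ and $G(0^+)=0$, since $\lambda E=O(r^{n-1})$.

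Then $E'=D\ge\Bdef\ge(n-3)\lambda E$, which is exactly the monotonicity of $E/\phi^{n-3}$ you need. With this inserted, your Steps 1--2 complete the proof as in the paper. Your fallback of changing the exponent would alter the hypotheses, so it would not prove the proposition as stated. The closing check for $\phi=\sinh$ is not part of this statement.
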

\begin{proof}
If $f$ is constant, then $E=\widetilde{\Ndef}=0$, so there is nothing to prove. Hence assume that $f$ is nonconstant; then $E(r)>0$ for every $r>0$. Using Equation \eqref{Fprime}, we obtain
\[
\widetilde{\Ndef}^\prime(r)=\phi(r)\phi^\prime(r)\Big(\frac{E^\prime F-2E^2}{F^2}\Big)+\Big((\phi\phi^\prime)^\prime - \phi\phi^\prime(n-1)\frac{\phi^\prime}{\phi}\Big)\frac{E}{F}.
\]
We have
\[
\frac{E^\prime}{F}=\frac{2A}{F}+\frac{\Bdef}{F},
\]
so that
\[
\widetilde{\Ndef}^\prime(r)=2\phi(r)\phi^\prime(r)\Big(\frac{A F-E^2}{F^2}\Big)+\phi\phi^\prime\frac{\Bdef}{F}+\Big((\phi\phi^\prime)^\prime - (n-1)(\phi^\prime)^2\Big)\frac{E}{F}.
\]
Using Equation \eqref{eq:phiBparts} and the identity
\[
(\phi\phi^\prime)^\prime=(\phi^\prime)^2+\phi\phi^{\prime\prime}
\]
we obtain
\begin{equation}\label{intermediate}
\widetilde{\Ndef}^\prime(r)=2\phi(r)\phi^\prime(r)\Big(\frac{A F-E^2}{F^2}\Big)-(n-2)\frac{\phi^\prime}{F}\int^r_0\phi^{\prime\prime}Eds+\phi\phi^{\prime\prime}\frac{E}{F}.
\end{equation}
Equation \eqref{eq:Brot2} implies that
\[
\Bdef^\prime(r)-(n-3)\lambda(r)E^\prime(r)=2\lambda(r)A(r)\geq 0, \lambda(r)=\frac{\phi^\prime(r)}{\phi(r)},
\]
so that, using $\Bdef(r)\to 0$ and $\lambda(r)E(r)\to 0$ as $r\to 0$,
\[
\Bdef(r)\geq (n-3)\int^r_0\lambda(s)E^\prime(s)ds.
\]
Under the assumprion $\lambda^\prime\leq 0$, integration by parts gives
\[
\Bdef(r)\geq (n-3)\lambda(r)E(r).
\]
Next, since
\[
E^\prime=2A+\Bdef\geq \Bdef\geq (n-3)\lambda E
\]
we have
\[
\frac{E^\prime}{E}\geq (n-3)\frac{\phi^\prime}{\phi}
\]
and therefore $E(r)/\phi^{n-3}(r)$ is nondecreasing. Using this fact, and since on a rotationally symmetric Cartan-Hadamard manifold we have $\phi^{\prime\prime}\geq 0$, we have the estimate
\[
\int^r_0\phi^{\prime\prime}(s)E(s)ds\leq \frac{E(r)}{\phi(r)^{n-3}}\int^r_0\phi^{\prime\prime}(s)\phi(s)^{n-3}ds.
\]
Using this estimate in Equation \eqref{intermediate}, we obtain
\[
\widetilde{\Ndef}^\prime(r)\geq 2\phi(r)\phi^\prime(r)\Big(\frac{A F-E^2}{F^2}\Big)+\frac{E}{F}\Big(\phi\phi^{\prime\prime} -\frac{(n-2)\phi^\prime}{\phi^{n-3}}\int^r_0\phi^{\prime\prime}\phi^{n-3}ds\Big).
\]
By Equation \eqref{eq:CS} the first term is always nonnegative, and the second term is nonnegative if 
\[
\phi^{\prime\prime}(r)\phi^{n-2}(r)\geq (n-2)\phi^\prime(r)\int^r_0\phi^{\prime\prime}(s)\phi^{n-3}(s)ds.
\]
The proof is complete.
\end{proof}

Proposition \ref{newF} identifies a new Almgren frequency function on the real hyperbolic space. Indeed, $\phi(r)=\sinh(r)$ satisfies all of the requirements. First, observe that 
\[
\Big(\frac{\cosh(r)}{\sinh(r)}\Big)^\prime=-\frac{1}{\sinh^2(r)}<0.
\]
Second, if we define
\[
D_n(r)=\sinh^{n-1}(r)-(n-2)\cosh(r)\int^r_0\sinh^{n-2}(s)ds
\]
Differentiating and then integrating by parts, we obtain
\begin{align}\notag
D^\prime_n(r)&=\sinh^{n-2}(r)\cosh(r)-(n-2)\sinh(r)\int^r_0\sinh^{n-2}(s)ds\\ \notag
&=(n-3)\sinh(r)\int^r_0\sinh^{n-4}(s)ds>0
\end{align}
for $n\geq 4$. Since $D_{n}(0)=0$, it follows that  $D_{n}(r)\geq 0$ for $n\geq4$. For $n=3$, one computes directly
\[
D_3(r)=\sinh^2(r)-\cosh(r)\int^r_0\sinh(s)ds=\cosh(r)-1\geq 0.
\]
Let us summarize this discussion into a proposition.
\begin{proposition}\label{hyperbolicF}
Let $(\mathbb H^n, g_{hyp})=(\R^n, dr^2+\sinh(r)^2g_{\Sph^{n-1}})$, $n\ge3$, be the hyperbolic space with constant sectional curvature equal to $-1$. For any hyperbolic harmonic function $f$, the function
\[
N_{\mathbb H^n}(r)=\sinh(r)\cosh(r)\frac{E(r)}{F(r)}
\]
is nondecreasing in $r$ and $\lim_{r\to 0}N_{\mathbb H^n}(r)=\ord_p(f)$.
\end{proposition}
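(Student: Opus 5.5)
The proposition has two parts: monotonicity of $N_{\mathbb H^n}$ and its limit at the pole. Both reduce to results already established above.

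\emph{Monotonicity.} Specialize Proposition \ref{NewN} to $\phi(r)=\sinh(r)$. Then $\phi'=\cosh r\ge 1$ and $\phi''=\sinh r\ge 0$, so the metric is a rotationally symmetric Cartan--Hadamard model, and $\widetilde{\Ndef}=\sinh r\cosh r\,E/F=N_{\mathbb H^n}$. It remains to check the two hypotheses.
\begin{itemize}
\item The first hypothesis, $\lambda'\le 0$, is the computation $(\coth r)'=-1/\sinh^2 r$ recorded above.
\item For the second, $\phi''=\phi$, so the condition $\phi''\phi^{n-2}\ge (n-2)\phi'\int_0^r\phi''\phi^{n-3}\,ds$ becomes $\sinh^{n-1}r\ge (n-2)\cosh r\int_0^r \sinh^{n-2}s\,ds$, that is, $D_n(r)\ge 0$.
\end{itemize}
For $n\ge 4$, I would verify the displayed formula for $D_n'$. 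Write $\sinh^{n-1}=\sinh\cdot\sinh^{n-2}$ and integrate $\int_0^r\sinh^{n-2}$ by parts; this gives $(n-2)\int_0^r\sinh^{n-2}=\sinh^{n-3}r\cosh r-(n-3)\int_0^r\sinh^{n-4}$. Substituting yields $D_n'=(n-3)\sinh r\int_0^r\sinh^{n-4}\ge0$. Together with $D_n(0)=0$ this gives $D_n\ge 0$. For $n=3$, the direct formula $D_3=\cosh r-1\ge0$ settles it.

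One caveat: Proposition \ref{NewN} implicitly needs $F(r)>0$. If $F(r_0)=0$ for some $r_0>0$, then $f\equiv0$ on $S_{r_0}$, and the maximum principle on $B_{r_0}$ forces $f\equiv 0$ there and hence everywhere by unique continuation. So for nonzero $f$ the quotient is well defined for all $r>0$. For constant $f$ the statement is trivial, with both sides zero and $\ord_p f=0$.

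\emph{Limit at the pole.} I would compare with $\Ndef$ and use Proposition \ref{order}, which gives $\lim_{r\to0}\Ndef(r)=k$ where $\Ndef=\phi E_{\phi'}/F$. Since $1\le\phi'(s)\le\cosh r$ for $s\le r$, we have $E\le E_{\phi'}\le \cosh r\,E$. Hence
\[
\frac{\Ndef(r)}{\cosh r}\le \frac{\sinh r\,E(r)}{F(r)}\le \Ndef(r),
\]
and multiplying through by $\cosh r\to1$ shows $N_{\mathbb H^n}(r)=\cosh r\cdot \sinh r\,E/F\to k$.

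Alternatively, the asymptotics in the proof of Proposition \ref{order} can be used directly: $E(r)=k\epsilon_k r^{2k+n-2}(1+O(r))$, $F(r)=\epsilon_k r^{2k+n-1}(1+O(r))$, and $\sinh r\cosh r=r+O(r^3)$ give $N_{\mathbb H^n}=k+O(r)$. In the case $k=0$, $E=O(r^n)$ and $F\sim c r^{n-1}$, so the limit is $0$.

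The only step needing genuine care is the integration by parts behind $D_n'$ together with the sign of $D_n$. The rest is bookkeeping.
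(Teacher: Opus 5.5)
Your proposal is correct and follows the paper's route. You specialize Proposition \ref{NewN} to $\phi=\sinh$, check $(\coth r)'<0$, verify $D_n\ge 0$ through the same computation $D_n'=(n-3)\sinh r\int_0^r\sinh^{n-4}s\,ds$ (with $D_3=\cosh r-1$), and obtain the limit at the pole from Proposition \ref{order}. Your sandwich $E\le E_{\phi'}\le\cosh r\,E$, which reduces the limit directly to Proposition \ref{order}, is a clean way to supply the details the paper omits. Your remark that $F(r)>0$ for nonzero $f$ correctly justifies a point the paper leaves implicit.
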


The proof of the identity
\[
\lim_{r\to 0}N_{\mathbb H^n}(r)=\ord_p(f)
\]
is entirely analogous to the proof of Proposition \ref{order}, so we omit the details.

\subsection{Hyperbolic three-sphere theorem}

We can now state the hyperbolic three-sphere theorem. The key to this result is to use the new \emph{hyperbolic} frequency function $N_{\mathbb H^n}(r)$ given in Proposition \ref{hyperbolicF} instead of the specialization of $\Ndef(r)$ in Theorem \ref{thm:Nmono} for $\phi(r)=\sinh(r)$.

\begin{theorem}\label{hyperbolic-Agmon}
Let $f$ be harmonic on $\mathbb H^n$, $n\ge3$. Then
\begin{equation}\notag
\frac{d^2}{dx^2}
\log\!\left(
\frac{F(r)}{\sinh^{n-1}r}
\right)\ge0,
\qquad
x=\log\tanh r.
\end{equation}
Equivalently, if $0<r_1<r<r_2$ and $\theta\in(0,1)$ is determined by
\begin{equation}\notag
\log\tanh r
=(1-\theta)\log\tanh r_1
+\theta\log\tanh r_2,
\end{equation}
then
\begin{equation}\label{eq:3sphere-hyperbolic}
\frac{F(r)}{\sinh^{n-1}r}
\le
\left(
\frac{F(r_1)}{\sinh^{n-1}r_1}
\right)^{1-\theta}
\left(
\frac{F(r_2)}{\sinh^{n-1}r_2}
\right)^\theta.
\end{equation}
\end{theorem}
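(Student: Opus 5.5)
With $\phi(r)=\sinh r$, the coordinate $\tau_\phi$ from \eqref{eq:xphi-Agmon} satisfies
\[
\frac{d\tau_\phi}{dr}=\frac{1}{\sinh r\cosh r}=\frac{d}{dr}\log\tanh r,
\]
so we may take $x=\tau_\phi=\log\tanh r$. This is a smooth, strictly increasing bijection from $(0,\infty)$ onto $(-\infty,0)$, with $dx/dr=1/(\sinh r\cosh r)$. The plan is to show that the $x$-derivative of $\log\mathcal H_\phi$ is twice the hyperbolic frequency of Proposition \ref{hyperbolicF}, and then to invoke that proposition.

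\textbf{Step 1 (slope identity).} Assume first that $F(r)>0$ for all $r>0$. Combining \eqref{log-derivative} with the chain rule gives
\[
\frac{d}{dx}\log\frac{F(r)}{\sinh^{n-1}r}=\sinh r\cosh r\,\frac{d}{dr}\log\mathcal H_\phi(r)=2\sinh r\cosh r\,\frac{E(r)}{F(r)}=2N_{\mathbb H^n}(r).
\]
This is exactly the computation preceding \eqref{newF}, specialized to $\phi=\sinh$.

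\textbf{Step 2 (convexity).} Differentiating once more, $\frac{d^2}{dx^2}\log\mathcal H_\phi=2\sinh r\cosh r\,N_{\mathbb H^n}'(r)$. This is nonnegative by Proposition \ref{hyperbolicF}, which rests on Proposition \ref{NewN} together with the checks that $(\coth r)'<0$ and $D_n\ge 0$ already carried out in the paper. So the monotonicity of $N_{\mathbb H^n}$ in $r$ is the same as the convexity of $x\mapsto\log\mathcal H_\phi(r(x))$.

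\textbf{Step 3 (three-sphere inequality).} A convex function $\psi(x)=\log\mathcal H_\phi(r(x))$ satisfies, on the segment $[x_1,x_2]$ with $x_i=\log\tanh r_i$,
\[
\psi\bigl((1-\theta)x_1+\theta x_2\bigr)\le(1-\theta)\psi(x_1)+\theta\psi(x_2).
\]
Exponentiating this gives \eqref{eq:3sphere-hyperbolic}, with $\theta$ fixed by the stated relation $\log\tanh r=(1-\theta)\log\tanh r_1+\theta\log\tanh r_2$. Conversely, if \eqref{eq:3sphere-hyperbolic} holds for all triples $r_1<r<r_2$, then $\psi$ is midpoint-convex and continuous, hence convex, and since $\psi$ is $C^2$ this gives $\psi''\ge0$. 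This establishes the stated equivalence.

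\textbf{Main obstacle: positivity of $F$.} The only genuine point is that $F(r)>0$ for all $r>0$, which is needed for the logarithm and for $N_{\mathbb H^n}$ to be defined.
\begin{itemize}
\item If $f\equiv0$, both sides of \eqref{eq:3sphere-hyperbolic} vanish and there is nothing to prove.
\item If $f\not\equiv0$ but $f$ vanished identically on some $S_{r_0}$, then $f$ would be harmonic on $B_{r_0}$ with zero boundary values, so $f\equiv0$ on $B_{r_0}$ by the maximum principle. Unique continuation for harmonic functions on the real-analytic manifold $\mathbb H^n$ would then force $f\equiv0$ everywhere, a contradiction.
\end{itemize}
Hence $F>0$, and $F$ is smooth because the metric is smooth. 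The smoothness of $x\mapsto r$ then justifies the chain-rule manipulations in Steps 1 and 2.
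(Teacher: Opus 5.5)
Your proposal is correct and follows the paper's proof. With $x=\log\tanh r$ one has $\frac{d}{dx}=\sinh r\cosh r\,\frac{d}{dr}$, so $\frac{d}{dx}\log\mathcal H=2N_{\mathbb H^n}$ and convexity reduces to the monotonicity in Proposition~\ref{hyperbolicF}; your additional checks that $F(r)>0$ for $f\not\equiv0$ (maximum principle plus unique continuation) and of the converse direction of the equivalence are details the paper leaves implicit, and you handle them correctly.
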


\begin{proof}
By Proposition \ref{hyperbolicF}, we have
\[
\frac{d^2}{dx^2}\log\mathcal H_{\mathbb H^n}(r)
=2\frac{dN_{\mathbb H^n}}{dx}
=2\sinh r\cosh r\,N_{\mathbb H^n}'(r)\geq 0.
\]
The three-sphere inequality \eqref{eq:3sphere-hyperbolic} follows (after exponentiation) from the  convexity of $\log\mathcal H_{\mathbb H^n}$ with respect to the variable $x=\log\tanh r$.
\end{proof}


\begin{thebibliography}{ELMNPM}
\bibitem[Agm66]{Agm66} S.~Agmon, \emph{Unicit\'e et convexit\'e dans les probl\`emes diff\'erentiels}, S\'eminaire de Math\'ematiques Sup\'erieures, No.~13, Presses de l'Universit\'e de Montr\'eal, Montr\'eal, 1966.


\bibitem[Alm79]{Alm79} F. J. Almgren, Jr, \emph{Dirichlet’s problem for multiple valued functions and the regularity of
mass minimizing integral currents}. In \emph{Minimal submanifolds and geodesics} (Proc. Japan-
United States Sem., Tokyo, 1977), pages 1–6. North-Holland, Amsterdam-New York,
1979

\bibitem[Bru93]{Bru93} R.~Brummelhuis, \emph{Logarithmic convexity of $L^2$ norms for solutions of linear elliptic equations}, Indag. Math. (N.S.) \textbf{4} (1993), no.~4, 423--429.

\bibitem[CM11]{CM11} T.~H.~Colding and W.~P.~Minicozzi II, \emph{A Course in Minimal Surfaces}, Graduate Studies in Mathematics, vol.~121,
American Mathematical Society, Providence, RI, 2011.

\bibitem[DS22]{DS22} L. F. Di Cerbo, M. Stern, \emph{Price inequality and Betti number growth on manifolds without conjugate points}, Comm. Anal. Geom. \textbf{30} (2022), no. 2, 297-334.

\bibitem[DX84]{DX84} H.~Donnelly and F.~Xavier, \emph{On the differential form spectrum of negatively curved Riemannian manifolds},
Amer. J. Math. \textbf{106} (1984), no.~1, 169--185.

\bibitem[GL86]{GL86} N.~Garofalo and F.-H.~Lin, \emph{Monotonicity properties of variational integrals, $A_p$ weights and unique continuation},
Indiana Univ. Math. J. \textbf{35} (1986), no.~2, 245--268.

\bibitem[KM94]{KM94} J.~Korevaar and J.~L.~H.~Meyers, \emph{Logarithmic convexity for supremum norms of harmonic functions}, Bull. London Math. Soc. \textbf{26} (1994), no.~4, 353--362.

\bibitem[Rel40]{Rel40} F.~Rellich, \emph{Darstellung der Eigenwerte von $\Delta u+\lambda u=0$ durch ein Randintegral}, Math. Z. \textbf{46} (1940), 635--636.

\bibitem[Pet16]{Petersen} P. Petersen,  \emph{Riemannian Geometry}. Grad. Texts in Math., 171, Springer, Cham, 2016.

\bibitem[Pri83]{Pri83} P.~Price, \emph{A monotonicity formula for Yang--Mills fields}, Manuscripta Math. \textbf{43} (1983), no.~2--3, 131--166.

\end{thebibliography}
\end{document}